\DeclareMathOperator{\Sp}{Sp}
\DeclareMathOperator{\SL}{SL}
\DeclareMathOperator{\GL}{GL}
\DeclareMathOperator{\M}{M}
\newcommand{\deter}{\textnormal{det}_*}
\DeclareMathOperator{\n}{N}
\begin{document}

\allowdisplaybreaks

\newcommand{\arXivNumber}{1506.08071}

\renewcommand{\PaperNumber}{076}

\FirstPageHeading

\ShortArticleName{Weil Representation of a Generalized Linear Group}

\ArticleName{Weil Representation of a Generalized Linear Group\\ over a  Ring of Truncated Polynomials over a Finite\\ Field Endowed with a Second Class Involution}

\Author{Luis GUTI\'ERREZ FREZ~$^\dag$ and Jos\'e PANTOJA~$^\ddag$}

\AuthorNameForHeading{L.~Guti\'errez Frez and J.~Pantoja}

\Address{$^\dag$~Instituto de Ciencias F\'{\i}sicas y Matem\'aticas, Universidad Austral de Chile,\\
\hphantom{$^\dag$}~Campus Isla Teja SN, Edificio Pug\'in, Valdivia, Chile}
\EmailD{\href{mailto:luis.gutierrez@uach.cl}{luis.gutierrez@uach.cl}}

\Address{$^\ddag$~Instituto de Matem\'aticas, Pontificia Universidad Catolica de Valpara\'{i}so,\\
\hphantom{$^\ddag$}~Blanco Viel 596, Co.~Bar\'{o}n, Valpara\'{i}so, Chile}
\EmailD{\href{mailto:jpantoja@ucv.cl}{jpantoja@ucv.cl}}

\ArticleDates{Received July 03, 2015, in f\/inal form September 14, 2015; Published online September 26, 2015}

\Abstract{  We construct a complex linear Weil representation~$\rho$ of the generalized special linear group
$G=\SL_*^{1}(2,A_n)$ ($A_n=K[x]/\langle x^n\rangle $, $K$ the quadratic extension of the f\/inite f\/ield~$k$ of~$q$ elements, $q$~odd), where~$A_n$ is endowed with a second class involution. After the construction of a specif\/ic data, the representation is def\/ined on the generators of a~Bruhat presentation of~$G$, via linear operators satisfying the relations of the presentation. The structure of a~unitary group~$U$ associated to~$G$ is described. Using this group we obtain a~f\/irst decomposition of~$\rho$.}

\Keywords{Weil representation; generalized special linear group}

\Classification{20C33; 20C15; 20F05}

\section{Introduction}

Weil representations is a central topic in representation theory. They arise as a consequence of one of the many seminal works of A.~Weil~\cite{weil}. They are projective representations of the groups $\Sp(2n,F)$, $F$ a locally compact f\/ield. These representations are an important subject of study. By decomposition into irreducible factors, Weil  representations provide all irreducible complex representations of the general linear group over a f\/inite f\/ield, and over a local f\/ield of residual characteristic dif\/ferent from~$2$. It has many other important consequences, and it has applications in dif\/ferent  topics as theta functions and physics to mention only some of them. In fact, in  representation theory  it helps to understand (among other things) the harmonic analysis of the group, and in the Langlands program, to explain the relations between linear groups def\/ined over local or global f\/ields, and the Galois groups of the f\/ields.

A point of view that has been favorably used in representation theory, is  to extend  to higher rank groups, methods successfully used in lower rank groups. This philosophy has led  Pantoja and Soto-Andrade   (see~\cite{P,ps,P-SA}) to def\/ine the groups $\GL_{\ast}^{\varepsilon}(2,A)$ and $\SL_{\ast}^{\varepsilon}(2,A)$, where \mbox{$\varepsilon=\pm 1$}. These are ``generalized linear groups'' of rank $2$ with coef\/f\/icients in a unitary involutive ring~$(A,*)$~\cite{book} ($\varepsilon=\pm1$). In this way, the symplectic group and the orthogonal group  of rank~$n$ over a~f\/ield~$F$ appear as  groups $\SL_{\ast}^{\varepsilon}(2,A)$ considering as involutive ring, the ring of $n\times n$ mat\-rices over~$F$ with the transposition of matrices, and taking, respectively, $\varepsilon$ equals to~$-1$ and~$1$. Dif\/ferent  choices of involutive rings produce new  examples of diverse  kind.

The  rings considered are, in general, non-commutative, and this non-commutativity is controlled by  the relation $(ab)^{\ast}=b^{\ast}a^{\ast}$. In this sense, it should be pointed out  the similarity of our group $\GL_{\ast}(2,A)$, with the quantum group $\GL^q(2)$ (as a ``group" of matrices with some com\-mu\-ting relations). Furthermore, the generalized linear groups af\/ford the notion of a $\ast$-determinant, $\deter$, in analogy with the $q$-determinant. Also, the homomorphism   $\ast$-determinant can be considered as a f\/irst way of  describing the mentioned groups, as these can be def\/ined as two by two matrices with coef\/f\/icients in~$A$, (satisfying some commutation relations that involve~$\ast$), with $\ast$-determinant dif\/ferent from~$0$. Then, the groups $\SL_{\ast}(2,A)$ appear as the kernel of the homomorphism~$\deter$.

The approach, used here, has been considered in  diverse interesting groups~\cite{Gu,G-P-SA,andr}.
Having a presentation of the group, that the authors call Bruhat presentation, and certain specif\/ic data, a generalized Weil representation has been constructed. Historically, the groups $G$ were def\/ined f\/irst for $\varepsilon=-1$. The work of Soto-Andrade on the symplectic group over a f\/inite f\/ield~\cite{SA} and the representations constructed by Guti\'{e}rrez~\cite{Gu} appear as  examples of these constructions in \cite{G-P-SA}. Later, after the generalization to $\varepsilon=1$,  Vera~\cite{andr} constructed a generalized Weil representation of the orthogonal group over a f\/inite f\/ield, where she also relates the representation with the one that can be constructed by the theory of dual pairs of Howe~\cite{H}. However, even that  the orthogonal group is a natural example of a generalized linear group with $\varepsilon=1$, her work is done treating the group as a ``symplectic type'' group, i.e., as a $\SL_{\ast}^{-1}$ group, by twisting the transposition of matrices to def\/ine the involution of the ring.

Our method to construct Weil representations is one of many successful approaches to attack this topic, and has been studied by several authors (see, e.g., \cite{A-M,Ge,Go,weil}). Using Weil's original point of view,  Szechtman et al.\ in~\cite{C-Sze} construct Weil representations of symplectic groups over f\/inite rings via Heisenberg groups. Recently, Herman and Szechtman~\cite{He-Sze} construct Weil representations of unitary groups associated to a~f\/inite, commutative, local principal ring of odd characteristic, by imbedding the group into a symplectic group.

A great diversity of groups can be originated via generalized linear groups for appropriate choices of involutive rings. For each of them, Weil representations could be constructed by  using our approach of def\/ining linear operators for each generator of the group in such a way that they satisfy the basic (universal) relations of a ``simple'' presentation. This variety of cases for which a (generalized) Weil representation could be produced has led  us to verify that, in practice, the procedure is ef\/fective. In fact, several examples of groups $\GL_*^{\varepsilon}(2,A)$ and $\SL_*^{\varepsilon}(2,A)$ have been given in loc.\ cit.\ for dif\/ferent choices of involutive rings provided with  f\/irst class involutions~\cite{book}. However, no example with an involutive ring (algebra) provided with a second class involution has been considered up to now. Furthermore, explicit constructions for generalized linear groups with~$\varepsilon=1$ have not been made so far.

\looseness=-1
In this work, we construct a Weil representation of a generalized $\SL_*^1$ ``orthogonal type group'' (i.e., a generalized special linear group with $\varepsilon=1$) over the ring $A_n=K[x]/\langle x^n\rangle $, which is a~non-semisimple ring (algebra in fact) over~$K$, $K$ the quadratic extension of the f\/inite f\/ield~$k$ of~$q$ elements ($q$~odd), provided with a second class involution. We show f\/irst that the group under consideration has a Bruhat presentation, after which a data necessary to produce a Weil representation of $G$ via relations and generators, is specif\/ically described. We also obtain a f\/irst decomposition of the representation.
Using a complete dif\/ferent approach leaning on the works of Amritanshu Prasad and Kunal Dutta~\cite{Pr-Du,Pr}, a further decomposition could be achieved. Furthermore, a comparison of their methods and ours  will be performed in a work that will appear elsewhere.

The paper is organized as follows: In Section~\ref{section2}, we present the main def\/initions on generalized classical groups $\GL_*^{\varepsilon}(2,A)$ and
$\SL_*^{\varepsilon}(2,A)$ for an involutive ring $(A,*)$ and we describe some properties of the truncated polynomials ring
$K[x]/\langle x^n\rangle$, for~$K$  a quadratic extension of a f\/ield of~$q$ elements ($q$ odd). In Section~\ref{section3},  a Bruhat presentation of $\SL_*^{1}(2,A_n)$ is constructed. Section~\ref{section4} is devoted f\/irst to recall a very general procedure to construct generalized Weil representations of $\SL_*^{\varepsilon}(2,A)$, for a group with a Bruhat presentation and a suitable data $(M,\chi,\gamma,c)$. After this, the necessary data for the group under consideration is produced and completely described and detailed.
Finally, in Section~\ref{section5}, we def\/ine the abelian ``unitary'' group $U(M,\chi,\gamma,c)$ of~$(M,\chi,\gamma,c)$, with the explicit decomposition into cyclic subgroups, which allow us to get a~f\/irst decomposition of the constructed representation.

\section{Preliminaries}\label{section2}

In this section, we f\/ix notations and recall some basic facts about generalized
general special linear groups over involutive rings.

Let $A$ be a unitary ring endowed with an involution $\ast$, i.e., an
antiautomorphism $a\mapsto a^{\ast}$  of~$A$ of order two. We denote by~$Z(A)$
the center of $A$, and we write $A^{\times}$ (respectively $Z(A)^{\times}$)
for the group of  invertible elements of~$A$ (respectively, of~$Z(A)$). $T^{\rm sym}$
(respectively $T^{\rm asym})$ stands for the set of symmetric (respectively
antisymmetric) elements of the subset~$T$ of~$A$, i.e., the set of elements~$a$ in $T$ such that $a^{\ast}=a$ (respectively $a^{\ast}=-a$). The involution~$\ast$ induces an involution on the ring of $2\times2$ matrices with
coef\/f\/icients in~$A$, by $(g^{\ast})_{ij}=g_{ji}^{\ast}$ ($g\in \M(2,A)$), which
we denote also with the symbol $\ast$.

\subsection[The groups $\SL_{*}^{1}(2,A)$]{The groups $\boldsymbol{{\rm SL}_{*}^{1}(2,A)}$}

We give a brief description of the groups $\SL_{\ast}^{1}(2,A)$. For more details see~\cite{P-SA}.

If $A$ is a unitary ring  with an involution $\ast,$ and $J=\left(
\begin{matrix}%
0 & 1\\
1 & 0
\end{matrix}
\right)$ in $\M(2,A)$, let
$
\GL_{\ast}^{1}(2,A):=\left\{
g=\begin{pmatrix}
a & b\\
c & d
\end{pmatrix}\in \M(2,A)\colon g^{\ast}Jg=\lambda(g)J,\;\lambda(g)\in (Z(A)^{\rm sym})^{\times}\right \}$.
Then $\GL_*^1(2,A)$ is a group.  Mo\-re\-over, the map \begin{gather*}
\deter\colon \ \GL_{\ast}^{1}(2,A)\to (Z(A)^{\rm sym})^{\times}
\end{gather*}
given by $\deter(g)=\lambda(g)=ad^{\ast}+bc^{\ast
}=a^{\ast}d+c^{\ast}b$ is an homomorphism.

We def\/ine $\SL_{\ast}^{1}(2,A)$ as the kernel of $\deter$.
One can observe that the entries of  $g=\begin{pmatrix}
a & b\\
c & d
\end{pmatrix} \in \SL_{\ast}^{1}(2,A)$ satisfy:
$a^{\ast }c=-c^{\ast}a$, $ab^{\ast}=-ba^{\ast}$, $b^{\ast
}d=-d^{\ast}b$, $cd^{\ast}=-dc^{\ast}$.

\subsection{The involutive ring of truncated polynomials}

Let $k$ be a f\/inite f\/ield of $q$ elements, where $q$ is a power of an odd prime~$p$. We consider~$K$ the unique quadratic extension of~$k$ and we take $\Delta\in K$ such that $K=k(\Delta)$ and $\Delta^2\in k$. We write $\overline{a+b\Delta}$ to denote the image $a-b\Delta$  of  the element $a+b\Delta$ under the nontrivial element of the Galois group of the extension~$K/k$.  Let
\begin{gather*}
A_n=K[x]/\langle x^n\rangle, \qquad n\in\mathbb{N},
\end{gather*}
which will be considered as  polynomials (with coef\/f\/icients in $K$), truncated at $n$ (i.e., such that $x^m=0$ for $m\geqq n$).

We def\/ine an involution $*$ in the $k$-algebra $A_n$ by
\begin{align*}
a+b\Delta&\mapsto  \overline{a+b\Delta},\\
x&\mapsto  -x.
\end{align*}

 We f\/irst present  some results concerning cardinalities about sets that we will use later on.
 \begin{proposition}\label{card}
If $ \mid S \mid$ denotes the cardinality of $S$, we have
\begin{enumerate}\itemsep=0pt
\item[$1.$]
The order of the group of invertible elements of $A_n$ is $\vert A_n^{\times}\vert=(q^2-1)q^{2(n-1)}$.
\item[$2.$]
 \begin{gather*}
 A_n^{\rm sym}=\left\{a=\sum_{i=0}^{n-1} a_ix^i\colon  a_{2i}\in k \text{ and } a_{2i+1}\in \Delta
 k\right\}
\end{gather*}
 has cardinality   $\vert A_n^{\rm sym}\vert =q^n$.
\item[$3.$]
\begin{gather*}
A_n^{\rm asym}=\left\{a=\sum_{i=0}^{n-1} a_ix^i\colon  a_{2i}\in \Delta k \text{ and } a_{2i+1}\in  k\right\}
\end{gather*}
has cardinality  $\vert A_n^{\rm asym}\vert =q^n$.
\end{enumerate}
\end{proposition}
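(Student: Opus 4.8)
The plan is to prove the three assertions in turn, all by direct computation once the multiplicative structure of $A_n$ and the action of $*$ on monomials are made explicit. For item~1, I would first note that $A_n$ is a local ring whose maximal ideal is $\langle x\rangle$: since $x$ is nilpotent, every element of $\langle x\rangle$ is nilpotent, and an element $a=\sum_{i=0}^{n-1}a_ix^i$ is invertible if and only if $a_0\in K^{\times}$. Indeed, if $a_0\neq 0$ then $a=a_0(1-y)$ with $y\in\langle x\rangle$ nilpotent, so $1-y$ (hence $a$) is a unit; conversely, if $a_0=0$ then $a$ is nilpotent. Counting, $a\in A_n^{\times}$ amounts to choosing $a_0\in K^{\times}$ ($q^2-1$ possibilities) and $a_1,\dots,a_{n-1}\in K$ ($q^2$ possibilities each), which yields $|A_n^{\times}|=(q^2-1)(q^2)^{n-1}=(q^2-1)q^{2(n-1)}$.

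For items~2 and~3 the crucial observation is that $A_n$ is commutative, so $*$ is an automorphism, and on a monomial it acts by $(cx^i)^{*}=\overline{c}\,(-x)^i=(-1)^i\overline{c}\,x^i$ for $c\in K$. Hence for $a=\sum_{i=0}^{n-1}a_ix^i$ one has $a^{*}=\sum_{i=0}^{n-1}(-1)^i\overline{a_i}\,x^i$, so the equation $a^{*}=a$ (respectively $a^{*}=-a$) is equivalent, coefficient by coefficient, to $\overline{a_i}=(-1)^i a_i$ (respectively $\overline{a_i}=(-1)^{i+1}a_i$) for all $i$. I would then use the two elementary identities $\{c\in K\colon \overline{c}=c\}=k$ and $\{c\in K\colon\overline{c}=-c\}=\Delta k$: the first is the definition of the fixed field, and for the second, writing $c=u+v\Delta$ with $u,v\in k$, the condition $\overline c=-c$ reads $u-v\Delta=-u-v\Delta$, i.e.\ $2u=0$, i.e.\ $u=0$ since $q$ is odd. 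Substituting, the symmetry condition forces $a_{2i}\in k$ and $a_{2i+1}\in\Delta k$, which is exactly the stated description of $A_n^{\rm sym}$, while the antisymmetry condition forces $a_{2i}\in\Delta k$ and $a_{2i+1}\in k$, giving $A_n^{\rm asym}$.

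Finally, since $|k|=|\Delta k|=q$ and the $n$ coefficients $a_0,\dots,a_{n-1}$ of an element of $A_n$ are constrained independently, each to lie in one of these two $q$-element sets, we obtain $|A_n^{\rm sym}|=|A_n^{\rm asym}|=q^{n}$ regardless of the parity of $n$. I do not expect any genuine difficulty in this argument; the only place warranting a little care is the identification $\{c\in K\colon\overline c=-c\}=\Delta k$, which relies on $q$ being odd (so that $2$ is invertible and $K=k\oplus\Delta k$ as a $k$-vector space). The rest is bookkeeping of coefficients.
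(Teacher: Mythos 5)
Your proposal is correct and follows essentially the same route as the paper: invertibility is characterized by a nonzero constant term for item~1, and items~2 and~3 reduce to the coefficientwise condition $a_i=(-1)^i\overline{a_i}$ (respectively $a_i=-(-1)^i\overline{a_i}$) together with the identifications $K^{\rm sym}=k$ and $K^{\rm asym}=\Delta k$. You merely supply details the paper leaves implicit (the nilpotence argument for units and the explicit verification that $\{c\in K\colon \overline{c}=-c\}=\Delta k$ uses $q$ odd), so no further comparison is needed.
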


\begin{proof}
1.~The result is clear observing that  an invertible element of the ring must have nonzero constant term.

2.~Let $a=\sum\limits_{i=0}^{n-1}a_ix^i$ be an arbitrary element in $A_n$, then
\begin{gather*}
a^*=\sum_{i=0}^{n-1} (-1)^i \overline{a_i}x^i.
\end{gather*}
So $a^*=a$ if and only if  $a_i=(-1)^i\overline{a_i}$, for each $i$. Thus
\begin{gather*}
 A_n^{\rm sym}=\left\{a=\sum_{i=0}^{n-1} a_ix^i\colon  a_{i}\in  k \text{ for } i \text{ even and } a_{i}\in \Delta k \text{ for }i \text{ odd}\right\},
  \end{gather*}
  and the result follows.

3.~Similar to 2. This completes the proof.
\end{proof}

\begin{proposition}\label{orderSL}
The order of $\SL_{\ast}^{1}(2,A_n)$ is $(q^2-1)q^{4n-3}(q+1)$.
\end{proposition}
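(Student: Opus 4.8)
The plan is to compute $|G|$, where $G := \SL_{\ast}^{1}(2,A_n)$, by fibering $G$ over the set of first columns occurring among its elements. Since $A_n = K[x]/\langle x^n\rangle$ is commutative, $Z(A_n)=A_n$ (so $1\in(Z(A_n)^{\mathrm{sym}})^{\times}$) and $A_n$ is local with maximal ideal $\langle x\rangle$. For $g=\begin{pmatrix} a & b\\ c & d\end{pmatrix}\in\M(2,A_n)$ one computes
\begin{gather*}
g^{\ast}Jg=\begin{pmatrix} a^{\ast}c+c^{\ast}a & a^{\ast}d+c^{\ast}b\\ b^{\ast}c+d^{\ast}a & b^{\ast}d+d^{\ast}b\end{pmatrix},
\end{gather*}
and since $J^{\ast}=J$ this matrix is always $\ast$-symmetric, so its $(2,1)$-entry is the $\ast$-image of its $(1,2)$-entry. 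Hence $g\in G$ (i.e.\ $g^{\ast}Jg=J$) if and only if
\begin{gather*}
a^{\ast}c+c^{\ast}a=0,\qquad b^{\ast}d+d^{\ast}b=0,\qquad a^{\ast}d+c^{\ast}b=1.
\end{gather*}
Since $g$ is invertible, $\det g$ is a unit, so the first column $\begin{pmatrix} a\\ c\end{pmatrix}$ is unimodular, i.e.\ $a\in A_n^{\times}$ or $c\in A_n^{\times}$. I will call $(a,c)\in A_n^2$ \emph{admissible} if $a^{\ast}c+c^{\ast}a=0$ and $\langle a,c\rangle=A_n$, and write $\mathcal C$ for the set of admissible pairs.

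Next I would verify that $\mathcal C$ is exactly the set of first columns of elements of $G$: one inclusion is the previous remark, and conversely an admissible $(a,c)$ is completed by $\begin{pmatrix} a & 0\\ c & a^{-\ast}\end{pmatrix}\in G$ when $a$ is a unit and by $\begin{pmatrix} a & c^{-\ast}\\ c & 0\end{pmatrix}\in G$ when $c$ is a unit, each a one-line check against the three relations. Writing $e_1=\begin{pmatrix} 1\\ 0\end{pmatrix}$, it follows that the fibre $F_v:=\{g\in G:ge_1=v\}$ is non-empty for every $v\in\mathcal C$, and for a fixed $g_0\in F_v$ the map $g\mapsto g_0^{-1}g$ is a bijection of $F_v$ onto $\mathrm{Stab}_G(e_1)$. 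A matrix in $G$ fixing $e_1$ has $a=1$, $c=0$, hence $d=1$ (third relation) and $b\in A_n^{\mathrm{asym}}$ (second relation), so by Proposition~\ref{card} there are exactly $|A_n^{\mathrm{asym}}|=q^n$ of them. Thus every fibre has cardinality $q^n$ and $|G|=q^n\,|\mathcal C|$.

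The remaining step is to count $\mathcal C$. The relation $a^{\ast}c+c^{\ast}a=0$ says precisely that $a^{\ast}c\in A_n^{\mathrm{asym}}$, so for a fixed unit $a$ the admissible mates are the elements $c=a^{-\ast}y$ with $y\in A_n^{\mathrm{asym}}$, i.e.\ $q^n$ of them, among which those with $c\in A_n^{\times}$ are exactly the ones with $y\in A_n^{\times}$; and $y\in A_n^{\mathrm{asym}}$ is a unit iff its constant term (which lies in $\Delta k$) is non-zero, which accounts for $(q-1)q^{n-1}$ values of $y$. Using $|A_n^{\times}|=(q^2-1)q^{2(n-1)}$ from Proposition~\ref{card} and inclusion--exclusion over the overlapping cases ``$a$ is a unit'' and ``$c$ is a unit'', one obtains
\begin{gather*}
|\mathcal C|=2(q^2-1)q^{2(n-1)}q^n-(q^2-1)q^{2(n-1)}(q-1)q^{n-1}=(q^2-1)(q+1)q^{3n-3},
\end{gather*}
and therefore $|G|=q^n|\mathcal C|=(q^2-1)q^{4n-3}(q+1)$.

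The structural ingredients — commutativity and locality of $A_n$, and the reduction of membership in $G$ to three scalar equations — are immediate, and the fibering over first columns is a routine orbit count once the existence of completions is noted. The step needing genuine care is the enumeration of $\mathcal C$, and within it the correct handling of the overlap in which both $a$ and $c$ are units; this is precisely where the explicit shape of $A_n^{\mathrm{sym}}$ and $A_n^{\mathrm{asym}}$ from Proposition~\ref{card} — specifically the count of units inside $A_n^{\mathrm{asym}}$ — is used.
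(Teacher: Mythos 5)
Your proof is correct and follows essentially the same route as the paper: both identify the set of admissible first columns, show each such column extends to an element of $\SL_{\ast}^{1}(2,A_n)$, and multiply its cardinality by the size $q^n$ of the fibre (the unipotent stabilizer of $e_1$). The only cosmetic difference is in enumerating the columns, where you use inclusion--exclusion over the overlapping cases ``$a$ a unit'' and ``$c$ a unit'', while the paper writes the same set as a disjoint union $O_1\cup O_2$ by restricting the antisymmetric parameter in $O_2$ to non-units; both yield $(q^2-1)q^{3n-3}(q+1)$.
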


\begin{proof}
The group $\SL_*(2,A_n)$ acts on $\M_{2\times 1}(A_n)$ by left multiplication. Set
\begin{gather*}
		O_1   =   \left\{\begin{pmatrix}a\\ua\end{pmatrix} \in \M_{2\times 1}(A_n)\colon a \in A_n^{\times}, u\in A_n^{\rm asym} \right\}, \\
	O_2   =   \left\{\begin{pmatrix}uc\\c\end{pmatrix} \in \M_{2\times 1}(A_n)\colon c \in A_n^{\times}, u\in A_n^{\rm asym} \backslash A_n^{\times} \right\}.
\end{gather*}
We claim that the orbit  ${\rm Orb}_{\SL_*^1(2,A_n)}\begin{pmatrix}1\\0\end{pmatrix}$ is the union of $O_1$ and $O_2$. In fact,  we note that the f\/irst column  $\begin{pmatrix} a\\ c\end{pmatrix}$ of a matrix in $\SL_*^1(2,A_n)$ satisf\/ies   $a^*c=-c^*a$. Since $a$ or $c$ must be invertible, we get  $ca^{-1}$ (or  $ac^{-1}$) is anti-symmetric, then  $c=ua$ (or $a=uc$), for some anti-symmetric element $u$. From here  ${\rm Orb}_{\SL_*^1(2,A_n)}\begin{pmatrix} 1\\0\end{pmatrix}$ is contained in the union $O_1	\cup O_2$. On the other hand,  if $\begin{pmatrix}a\\ua\end{pmatrix}\in O_1$, then
\begin{gather*}
\begin{pmatrix}
a & 0\\
ua & a^{*^{-1}}
\end{pmatrix} \in \SL_*^1(2,A_n),\qquad
\begin{pmatrix} a\\ua\end{pmatrix} = \begin{pmatrix}
a & 0\\
ua & a^{*^{-1}}
\end{pmatrix}\begin{pmatrix} 1\\0\end{pmatrix}.
\end{gather*}
Thus,  $O_1$ is contained in the orbit ${\rm Orb}_{\SL_*^1(2,A_n)}\begin{pmatrix} 1\\0\end{pmatrix}$. Similarly, ${\rm Orb}_{\SL_*^1(2,A_n)}\begin{pmatrix} 1\\0\end{pmatrix}$ contains $O_2$. This proves the claim.

Now given that the sets are disjoint we verify
\begin{gather*}
\left\vert {\rm Orb}_{\SL_*^1(2,A_n)}\begin{pmatrix} 1\\ 0\end{pmatrix}\right\vert=\big(q^2-1\big)q^{2(n-1)}q^n+\big(q^2-1\big)q^{2(n-1)}q^{n-1}=\big(q^2-1\big)q^{3n-3}(q+1).
\end{gather*}
 Finally,  the isotropy group ${\rm Stab}\left(\begin{pmatrix} 1\\0\end{pmatrix}\right)$ is the group of upper unipotent matrices in $\SL_*^1(2,A_n)$ which  has cardinality  $q^n$, and therefore  our proposition follows.
\end{proof}

\section[Bruhat presentation for ${\rm SL}_{*}^{1}(2,A_n)$]{Bruhat presentation for $\boldsymbol{{\rm SL}_{*}^{1}(2,A_n)}$}\label{section3}

 In this section, we prove that the group $G=\SL_{\ast}^{1}(2,A_n)$  has a Bruhat-like presentation, which will be used in the construction of a Weil representation of~$G$.

 To this end, we set
\begin{gather*}
h_{t}=h(t)=\left(
\begin{matrix}
t & 0\\
0 & t^{\ast-1}
\end{matrix}
\right), \quad t\in A^{\times}, \qquad
 w=\left(
\begin{matrix}
0 & 1\\
1 & 0
\end{matrix}
\right),
\end{gather*}
  and
  \begin{gather*}
   u_{s}=u(s)=\left(
\begin{matrix}
1 & s\\
0 & 1
\end{matrix}
\right), \quad s\in A^{\rm asym},
\end{gather*}
noticing  that the matrices $h_{t}, \omega, u_{s}\in \SL_{\ast}^{1} (  2,A )$.

\begin{lemma}\label{inv1}
Let $a$ and $c$ be two elements in $A_n$ such that $Aa+Ac=A$
 and $a^*c=c^*a$. Then, there exits an element $s\in A_n^{\rm asym}$ such that $a+sc$ is invertible.
\end{lemma}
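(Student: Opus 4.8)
The plan is to use that $A_n=K[x]/\langle x^n\rangle$ is a commutative local ring whose maximal ideal is $\langle x\rangle$, so that an element of $A_n$ is invertible exactly when its constant term is nonzero. The first step is to observe that the hypothesis $A_na+A_nc=A_n$ forces $a$ or $c$ to be a unit: writing $1=\alpha a+\beta c$ for suitable $\alpha,\beta\in A_n$ and comparing constant terms gives $\alpha_0a_0+\beta_0c_0\neq 0$ in $K$, whence $a_0\neq 0$ or $c_0\neq 0$. This reduces the proof to two cases. (The hypothesis $a^*c=c^*a$ is in fact not needed for the ring $A_n$; it is recorded in the statement because it is the symmetry carried by the first column of an element of $G$, which is the situation in which the lemma is applied.)

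If $a$ is already a unit, take $s=0\in A_n^{\rm asym}$, so that $a+sc=a$ is invertible. If $a$ is not a unit, then $c$ is one, and the constant term of $a$ vanishes; hence for any $s$ the constant term of $a+sc$ equals $s_0c_0$. I would then choose $s=\Delta$, viewed as a constant element of $A_n$: since $\Delta^*=\overline{\Delta}=-\Delta$ and $\Delta$ has no $x$-terms, $\Delta\in A_n^{\rm asym}$, consistently with part~3 of Proposition~\ref{card} (its only nonzero coefficient $\Delta$ lies in $\Delta k$). Then $a+\Delta c$ has constant term $\Delta c_0\neq 0$, hence is invertible, which proves the lemma.

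The only point requiring a little care is the second case: one cannot simply pick $s$ with constant term $1$, because an antisymmetric element must have constant term in $\Delta k$; the remedy is precisely that $\Delta k$ is a nonzero $k$-subspace of $K$, so a unit-producing choice of $s$ still exists inside $A_n^{\rm asym}$. Beyond this observation the argument is entirely routine, and I do not anticipate any genuine obstacle — the essential content is just the local structure of $A_n$ together with the explicit description of $A_n^{\rm asym}$ provided by Proposition~\ref{card}.
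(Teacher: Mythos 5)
Your argument is correct and coincides with the paper's own proof: both reduce to the observation that $A_n$ is local (so the comaximality hypothesis forces $a$ or $c$ to be a unit), take $s=0$ when $a$ is invertible, and take a nonzero $s\in\Delta k$ when $c$ is invertible. Your remark that the relation $a^*c=c^*a$ plays no role in the argument is also accurate.
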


\begin{proof}
We  f\/irst observe that $a$ or $c$ has to be invertible. Suppose that $a$ is invertible, then considering $s=0$ we have the result. On the other hand, if~$a$ is non-invertible, then~$c$ is invertible and so any nonzero element~$s$ in~$k\Delta$ proves the lemma.
\end{proof}

\begin{proposition}\label{gen}
The matrices   $h_t$, $u_s$ and $w$, with $t\in A_n^{\times}$, $s\in A_n^{\rm asym}$,   generate $\SL_*(2,A_n)$.
\end{proposition}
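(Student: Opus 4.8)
The plan is to use the standard Bruhat-style argument: show that any $g \in \SL_*^1(2,A_n)$ can be reduced, by left multiplication by the matrices $h_t$, $u_s$ and $w$, to the identity. Write $g = \begin{pmatrix} a & b \\ c & d \end{pmatrix}$. Since $\deter(g)=1$, the relations $a^*c=-c^*a$ (equivalently $a^*(−c)=(−c)^*a$, so the pair $(a,-c)$ satisfies the hypothesis of Lemma~\ref{inv1}) hold, and moreover $Aa+Ac=A$ because the $\ast$-determinant condition forces the first column to be unimodular (one of $a$, $c$ is invertible, as already observed in the proof of Proposition~\ref{orderSL}).

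First I would apply Lemma~\ref{inv1} (to $a$ and $-c$, or directly rephrase the lemma for the antisymmetric relation that actually occurs) to produce $s \in A_n^{\rm asym}$ with $a' := a + sc$ invertible; then $u_s\, g$ has invertible $(1,1)$-entry. Next, left-multiplying by a lower unipotent matrix — which we must exhibit as a product of the given generators, e.g.\ via $w u_{s'} w$ after conjugation, using that $w u(s) w = \begin{pmatrix} 1 & 0 \\ s & 1\end{pmatrix}$ and that this lower-triangular matrix lies in $\SL_*^1$ when $s$ is antisymmetric — we can clear the $(2,1)$-entry, arriving at an upper triangular matrix $\begin{pmatrix} a' & b' \\ 0 & d' \end{pmatrix}$ in $\SL_*^1(2,A_n)$. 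The $\ast$-determinant condition then forces $d' = a'^{\ast -1}$ and $a'b'^{\ast}$ antisymmetric, i.e.\ $a'^{-1}b' \in A_n^{\rm asym}$; hence this matrix equals $h_{a'} u_{s''}$ for $s'' = a'^{-1}b' \in A_n^{\rm asym}$. Multiplying by $u_{s''}^{-1} = u_{-s''}$ and $h_{a'}^{-1} = h_{a'^{-1}}$ finishes the reduction to the identity.

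The main obstacle is the bookkeeping needed to realize the lower unipotent matrices $\begin{pmatrix} 1 & 0 \\ s & 1 \end{pmatrix}$ (for $s$ antisymmetric) as words in $h_t, u_s, w$, and — more subtly — to check that at the intermediate step the matrix $u_s g$ genuinely lies in $\SL_*^1(2,A_n)$ and that clearing the lower-left entry can be done by a generator of the required form; this uses the relations $a^{\ast}c = -c^{\ast}a$, $ab^{\ast}=-ba^{\ast}$, etc., listed after the definition of $\SL_*^1$, to guarantee that all the elements $s$ one needs are indeed antisymmetric. I would also remark that $\SL_*(2,A_n)$ here means $\SL_*^1(2,A_n)$, so the closure of the generated subgroup is automatic and only surjectivity onto $G$ requires proof. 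Once the column-reduction lemma and the antisymmetry checks are in place, the argument is the familiar Gaussian elimination and should go through without further difficulty.
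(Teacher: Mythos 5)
Your argument is correct and rests on the same two pillars as the paper's proof (Lemma~\ref{inv1} to make an entry invertible, plus the entry relations $a^{*}c=-c^{*}a$, $ab^{*}=-ba^{*}$ of $\SL_*^1$), but it is organized differently. The paper pivots on the $(2,1)$ entry: if $c$ is invertible it writes down the closed-form factorization $g=h(c^{*^{-1}})u(c^{*}a)wu(c^{-1}d)$, where the antisymmetry of $c^{*}a$ and $c^{-1}d$ is immediate from the entry relations, and if $c$ is neither $0$ nor invertible it first multiplies by $wu_s$ to reduce to that case. You instead pivot on the $(1,1)$ entry and run a genuine Gaussian elimination, realizing lower unipotents as $wu_{s'}w$; this buys nothing extra but costs you the two verifications you flag as ``bookkeeping'', namely that $s'=-c(a+sc)^{-1}$ and $s''=a'^{-1}b'$ are antisymmetric. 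Both do follow in one line from the column relations of $u_sg\in\SL_*^1(2,A_n)$ (membership is automatic, since $u_s$ is in the group) together with the commutativity of $A_n$, so your route closes. Two small corrections: your parenthetical claim that passing to $(a,-c)$ turns $a^{*}c=-c^{*}a$ into the symmetric relation $a^{*}(-c)=(-c)^{*}a$ is false (the sign does not flip under $c\mapsto -c$); the real resolution is the one you also offer, namely that the proof of Lemma~\ref{inv1} never uses the sign of the relation, only that $a$ or $c$ is invertible because $A_n$ is local. Also, an upper triangular matrix with $c=0$ need not have $a$ invertible a priori --- here it does because $A_n$ is local and $ad^{*}=1$ --- which is the same observation the paper uses in its $c=0$ case.
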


\begin{proof}
Let $g=\begin{pmatrix} a&b\\c&d\end{pmatrix}\in \SL_*^1(2,A_n)$.
If $c=0$, then   $g=h_au_{a^{-1}b}$.

Suppose now that $c$ is invertible. Since $\deter(g)=1$, i.e., $ad^*+bc^*=1$ one gets $b=c^{*^{-1}}+ac^{-1}d$. One checks  $g=h(c^{*^{-1}})u(c^*a)wu(c^{-1}d)$.

Now if $c\neq 0$ and non-invertible, we take an antisymmetric element $s$ satisfying the above lemma. Now
\begin{gather*}
wu_sg=
\begin{pmatrix} 0&1\\1&0\end{pmatrix}\begin{pmatrix} 1&s\\0&1\end{pmatrix}\begin{pmatrix} a&b\\c&d\end{pmatrix}=\begin{pmatrix} c&d\\a+sc&b+sd\end{pmatrix}.
\end{gather*}
Thus the matrix $wu_sg$ has entry $(2,1)$ invertible and  we can write
\begin{gather*}
wu(s)g=h((a+sc)^{*^{-1}})u((a+sc)^*c)wu((a+sc)^{-1}(b+sd)).
\end{gather*}
From here,  $\SL_*^1(2,A_n)$ is generated by the matrices  $h_t$, $u_s$ and $w$ with $t\in A_n^{\times}$, $s\in A_n^{\rm asym}$.
\end{proof}

\begin{lemma}\label{inv2}
Let $a$ and $b$ be two non-invertible antisymmetric  elements  in $A_n$. Then,  we can find an antisymmetric invertible element~$v\in A_n$ such that $a-v^{-1}$ and $b+v$  are antisymmetric invertible elements in~$A_n$.
\end{lemma}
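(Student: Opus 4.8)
The idea is to write everything in coordinates with respect to the basis $1, x, \dots, x^{n-1}$ of $A_n$ and exploit the description of $A_n^{\rm asym}$ from Proposition~\ref{card}. Recall that an antisymmetric element has constant term in $\Delta k$ (coefficient of $x^{2i}$ in $\Delta k$, coefficient of $x^{2i+1}$ in $k$), and that such an element is invertible precisely when its constant term is nonzero, i.e.\ lies in $\Delta k^{\times}$. Since $a$ and $b$ are non-invertible, both have zero constant term, so $a = a_1 x + a_2 x^2 + \cdots$ and $b = b_1 x + b_2 x^2 + \cdots$, with the parity conditions on the coefficients. The plan is to take $v$ of the form $v = \delta + (\text{higher order terms})$ with $\delta \in \Delta k^{\times}$; then $v$ is automatically invertible and antisymmetric, and $v^{-1}$ has constant term $\delta^{-1} \in \Delta k^{\times}$. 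Consequently $a - v^{-1}$ has constant term $-\delta^{-1} \neq 0$ and $b + v$ has constant term $\delta \neq 0$, so both are invertible; antisymmetry of $a - v^{-1}$ and $b + v$ is automatic since $A_n^{\rm asym}$ is a $k$-subspace and inversion preserves the symmetric/antisymmetric type (one checks $(v^{-1})^* = (v^*)^{-1} = (-v)^{-1} = -v^{-1}$).

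So the only real content is: \emph{does there exist an antisymmetric $v$ with nonzero constant term?} But that is immediate --- any nonzero $\delta \in \Delta k$ gives $v = \delta \in A_n^{\rm asym} \cap A_n^{\times}$, and then $a - v^{-1}, b + v$ are antisymmetric (as sums of antisymmetric elements) with nonzero constant term $-\delta^{-1}$, resp.\ $\delta$, hence invertible. In fact the statement is slightly stronger than needed in that $a, b$ play no role beyond being antisymmetric; the non-invertibility hypothesis is not even required for the construction to work, though it is the natural setting in which the lemma is applied. I would therefore phrase the proof as: pick any $v \in k\Delta$, $v \neq 0$; then $v$ is invertible and antisymmetric, $v^{-1} \in k\Delta$ is invertible and antisymmetric, and hence $a - v^{-1}$ and $b + v$ are antisymmetric and invertible.

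There is essentially no obstacle here --- the lemma is a bookkeeping statement about constant terms, and the one-line point that needs care is that inversion in $A_n$ sends antisymmetric elements to antisymmetric elements (equivalently, that $k\Delta$ is closed under inversion of invertible elements, which follows because $(k\Delta)^{\times} = \Delta k^{\times}$ and $(\Delta \mu)^{-1} = \Delta^{-1}\mu^{-1} = \Delta(\Delta^{-2}\mu^{-1})$ with $\Delta^2 \in k$). I would record that observation explicitly and then the rest is immediate.
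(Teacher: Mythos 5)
Your proof is correct and takes essentially the same route as the paper, which likewise observes that the non-invertible antisymmetric elements $a,b$ lie in the ideal $\langle x\rangle$ and that any nonzero $v\in\Delta k$ then does the job. One small caveat on your side remark only: the non-invertibility hypothesis \emph{is} used, since if $a$ were invertible with constant term $\delta^{-1}$ the choice $v=\delta$ would make $a-v^{-1}$ non-invertible, so ``any nonzero $v\in\Delta k$'' works precisely because $a$ and $b$ have zero constant term.
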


\begin{proof}
Since $a$, $b$ are non-invertible elements, they are in the ideal generated by $x$. Then taking  any nonzero element $v\in \Delta k$, we check that  $a-v^{-1}$ and $b+v$  are antisymmetric invertible elements in~$A_n$.
\end{proof}

Lemmas \ref{inv1}, \ref{inv2} and Proposition \ref{gen} prove, using the same argument as in Theorem 15 of \cite{P}, our next result:

\begin{theorem}\label{presentacion}
The matrices $h_t$, $u_s$ and $w$  $(t\in A_n^{\times}$, $s\in A_n^{\rm asym})$, with the commutating relations:
\begin{gather*}
h_{t_1}h_{t_2}=h_{t_1t_2},\\
u_{s_1}u_{s_2}=u_{s_1+s_2},\\
h_tu_s=u_{tst^*}h_t,\\
 w^2=1,\\
h_tw=wh_{t^{*^{-1}}},\\
wu_{t^{-1}} wu_{- t}wu_{t^{-1}}=h_{-t}
\end{gather*}
give a presentation of $\SL_*^1(2,A_n)$.
\end{theorem}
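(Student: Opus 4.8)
The plan is to check that the six relations hold among the matrices $h_t,u_s,w$ in $G:=\SL_*^1(2,A_n)$ — each is a $2\times 2$ matrix identity over $A_n$, verified directly; the only delicate one is the last, for which $u_{t^{-1}}$ makes sense only when $t^{-1}\in A_n^{\mathrm{asym}}$, i.e.\ $t^{*}=-t$, and it is precisely this sign that makes $wu_{t^{-1}}wu_{-t}wu_{t^{-1}}$ collapse to $h_{-t}$. Since Proposition~\ref{gen} says the $h_t,u_s,w$ generate $G$, we then get a surjective homomorphism $\pi\colon\widehat G\to G$, where $\widehat G$ is the abstract group on the listed generators and relations; write $\widehat h_t,\widehat u_s,\widehat w$ for its generators. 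The theorem is exactly the injectivity of $\pi$, toward which everything below is directed.

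I would first pin down a large part of $\widehat G$ from the ``easy'' relations. From $\widehat u_{s_1}\widehat u_{s_2}=\widehat u_{s_1+s_2}$ the subgroup $\widehat U:=\langle\widehat u_s\rangle$ is an abelian quotient of $(A_n^{\mathrm{asym}},+)$, and since $s\mapsto u_s$ is injective, $\pi$ is already injective on $\widehat U$, so $\widehat U\cong(A_n^{\mathrm{asym}},+)$; similarly $\widehat H:=\langle\widehat h_t\rangle\cong A_n^{\times}$, and $\widehat h_t\widehat u_s\widehat h_t^{-1}=\widehat u_{tst^{*}}$ gives $\widehat B:=\langle\widehat h_t,\widehat u_s\rangle=\widehat U\widehat H$ with $\widehat U$ normal. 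A quick order count ($\widehat U\cap\widehat H$ maps into $U\cap H=\{1\}$, hence is trivial) shows $\pi|_{\widehat B}$ is an isomorphism onto the upper triangular subgroup $B=\{h_tu_s\colon t\in A_n^{\times},\,s\in A_n^{\mathrm{asym}}\}$ of $G$. The relations $\widehat w^{2}=1$ and $\widehat h_t\widehat w=\widehat w\,\widehat h_{t^{*^{-1}}}$ let $\widehat w$ be moved past $\widehat H$, and solving the last relation yields the key consequence $\widehat w\,\widehat u_{t^{-1}}\,\widehat w=\widehat h_{-t}\,\widehat u_{-t^{-1}}\,\widehat w\,\widehat u_{t}$, valid for every invertible antisymmetric $t$.

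The main work is to establish a Bruhat-type normal form: using only the six relations, every element of $\widehat G$ should be rewritable as one of
\[
\widehat u_{s_0}\widehat h_{t_0},\qquad \widehat u_{s_0}\widehat h_{t_0}\,\widehat w\,\widehat u_{s_1},\qquad \widehat u_{s_0}\widehat h_{t_0}\,\widehat w\,\widehat u_{s_1}\,\widehat w,
\]
with $s_0,s_1\in A_n^{\mathrm{asym}}$, $t_0\in A_n^{\times}$, and $s_1$ non-invertible and nonzero in the third family. These are the counterparts of the three ``cells'' into which $G$ itself decomposes, according to whether the $(2,1)$-entry of a matrix is $0$ (giving $B$), invertible (giving $BwU$), or nonzero and non-invertible (the last cell being nonempty precisely because $A_n$ has zero divisors once $n\ge2$); one checks directly that these three sets partition $G$ and, via Proposition~\ref{orderSL}, that the displayed parametrizations are bijective onto them. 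The rewriting itself proceeds exactly as the reduction of a word in $\SL_2$ over a field to Bruhat form: push every $\widehat h$-factor to the right using $\widehat h_t\widehat u_s=\widehat u_{tst^{*}}\widehat h_t$ and $\widehat h_t\widehat w=\widehat w\widehat h_{t^{*^{-1}}}$, then repeatedly shorten the word by collapsing a subword $\widehat w\,\widehat u_s\,\widehat w$ via the identity above, each collapse replacing two $\widehat w$'s by one.

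I expect the only genuine obstacle to be that this collapsing move needs the middle argument $s$ to be invertible, which over $A_n$ it need not be; this is exactly what Lemmas~\ref{inv1} and~\ref{inv2} are for. Lemma~\ref{inv2} lets one replace a factor $\widehat u_s$ with $s$ non-invertible antisymmetric by $\widehat u_{s-v^{-1}}\,\widehat u_{v^{-1}}$ for a suitable invertible antisymmetric $v$ with $s-v^{-1}$ (and the neighbouring argument) invertible, thereby reducing to the invertible case; Lemma~\ref{inv1} plays the analogous role when the entry that must be inverted before a collapse is a priori non-invertible, by permitting a left multiplication by an appropriate $\widehat w\,\widehat u_v$ first. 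Together they force the number of $\widehat w$'s in any word down to at most two, so the rewriting terminates — this is the argument of Theorem~15 of~\cite{P}, which transfers verbatim. Once the normal form is established, injectivity of $\pi$ is routine: normal forms in different families map to matrices of different invertibility type, and within a family the surviving parameters $s_0,t_0$ (and $s_1$) are recovered from the entries of the image matrix using that $\widehat U\cong A_n^{\mathrm{asym}}$ and $\widehat H\cong A_n^{\times}$ are faithfully represented; since $\pi$ is onto and every element of $\widehat G$ is a normal form, $\pi$ is an isomorphism, which is the assertion of the theorem.
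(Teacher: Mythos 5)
Your proposal follows essentially the same route as the paper: the paper's entire proof of Theorem~\ref{presentacion} is the one-line observation that Lemmas~\ref{inv1} and~\ref{inv2} together with Proposition~\ref{gen} allow the argument of Theorem~15 of~\cite{P} to be transferred, and your verification of the relations, the three Bruhat cells indexed by the invertibility type of the $(2,1)$-entry, and the use of the two lemmas to drive the number of $w$'s in a word down to at most two is precisely an unpacking of that cited argument. Nothing in your outline diverges from the paper's approach.
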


\section[A generalized Weil representation of ${\rm SL}_*^1(2,A_n)$]{A generalized Weil representation of $\boldsymbol{{\rm SL}_*^1(2,A_n)}$}\label{section4}

In \cite[Theorem~4.4]{G-P-SA}, a Weil representation is constructed for groups that af\/ford a Bruhat presentation as the one obtained in Theorem~\ref{presentacion}.  Specif\/ic data  are required.

First, we must have a f\/inite right $A$-module $M$ and pair of functions $\chi\colon M\times M\rightarrow\mathbb{C}^{\times}$ and $\gamma\colon A^{\rm asym}\times M\rightarrow\mathbb{C}^{\times}$, a~nonzero complex number $c$, and a character $\alpha\in\widehat{A^{\times}}$, satisfying the following properties and relations among them:
\begin{enumerate}\itemsep=0pt
\item [a)] $\chi$ is bi-additive;
\item [b)] $ \chi(mt, v) = \alpha(tt^{\ast})\chi(m, vt^\ast) $ for $m,v\in M$ and $t\in
A^{\times}$;
\item[c)] $\chi(v,m)=[\chi(m,v)]^{-1}$;
\item [d)] $\chi(v,m)=1$ for all $m\in M$, implies $v=0$;
\item [e)]  $\gamma(b,mt)=\gamma(tbt^{\ast},m)$;
\item [f)] $\gamma(t,m+z)=\gamma(t,m)\gamma(t,z)\chi(m,zt)$ for all   $m,z \in M$, $t\in A^{\rm asym}$,
 where $t$ is anti-symmetric invertible in $A$ and  $c\in\mathbb{C}^{\times}$ satisf\/ies\\ $c^{2}\left\vert M\right\vert =1$;
\item [g)] $\gamma(b+b^{\prime},m)=\gamma(b,m)\gamma(b^{\prime},m)$, for all $b, b'\in A^{\rm asym}$ and $m\in M$, and
\item[ h)]  $\underset{m\in M}{\sum}\gamma(t,m)=\frac{\alpha(t)}{c}$.
\end{enumerate}

Then, with the above data we have (see \cite{G-P-SA}):

\begin{theorem}\label{theorem}
 If $\SL_*^{1}(2,A)$ has a Bruhat presentation, the data    $(M,\chi,\gamma,c)$  defines   a $($linear$)$ representation $(\mathbb{C}^{M},\rho)$ of $\SL_*^{1}(2,A)$,  which we call Weil representation,  by
 \begin{enumerate}\itemsep=0pt
\item[$1)$] $\rho_{u_b}(e_a)=\gamma(b,a)e_a$,
\item[$2)$] $\rho_{h_t}(e_a)=\alpha(t)e_{at^{-1}}$,
\item[$3)$] $\rho_w(e_a)=c\sum\limits_{b\in M}\chi(-a,b)e_b$,
 \end{enumerate}
for $a\in M$, $b\in A^{\rm asym}$, $t\in A^{\times}$ and $e_a$ the Dirac function at~$a$, defined by $e_a(u)=1$ if $u=a$ and $e_a(u)=0$ otherwise.
\end{theorem}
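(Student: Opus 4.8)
The plan is to verify that the three families of operators $\rho_{u_b}$, $\rho_{h_t}$, $\rho_w$ defined on the Dirac basis $\{e_a : a \in M\}$ satisfy each of the six defining relations of the Bruhat presentation in Theorem~\ref{presentacion}; since the presentation is a presentation of $\SL_*^1(2,A)$, this extends uniquely to a representation $\rho$ of the whole group on $\mathbb{C}^M$. Because this is precisely the content of \cite[Theorem~4.4]{G-P-SA}, the honest task is to transcribe that verification, checking that properties (a)--(h) of the data $(M,\chi,\gamma,c,\alpha)$ are exactly what is consumed at each relation. I would organize the check relation by relation.

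First, the two "additive" relations are immediate: $\rho_{h_{t_1}}\rho_{h_{t_2}}e_a = \alpha(t_1)\alpha(t_2) e_{a t_2^{-1} t_1^{-1}} = \alpha(t_1 t_2) e_{a(t_1t_2)^{-1}} = \rho_{h_{t_1 t_2}}e_a$ using that $\alpha$ is a character and $M$ is a right $A$-module, and $\rho_{u_{b_1}}\rho_{u_{b_2}}e_a = \gamma(b_1,a)\gamma(b_2,a)e_a = \gamma(b_1+b_2,a)e_a$ by property~(g). Next, for $h_t u_s = u_{tst^*}h_t$ one computes $\rho_{h_t}\rho_{u_s}e_a = \gamma(s,a)\alpha(t)e_{at^{-1}}$ and $\rho_{u_{tst^*}}\rho_{h_t}e_a = \alpha(t)\gamma(tst^*, at^{-1})e_{at^{-1}}$, so the relation reduces to $\gamma(s,a) = \gamma(tst^*, at^{-1})$, which is property~(e) applied with $m = at^{-1}$ (so that $mt = a$). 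The relation $w^2 = 1$ becomes, after applying $\rho_w$ twice, the Fourier-inversion identity $c^2 \sum_{b} \chi(-a,b)\sum_{b'}\chi(-b,b')e_{b'} = e_a$; one swaps sums, and the inner sum $\sum_b \chi(-a-b', b)$ must equal $|M|$ when $b' = -a$ and $0$ otherwise — this uses bi-additivity~(a), the nondegeneracy~(d) together with~(c) to kill the off-diagonal terms via a standard character-sum argument, and $c^2|M| = 1$ from~(f). For $h_t w = w h_{t^{*-1}}$ one gets on one side $\alpha(t) c \sum_b \chi(-a,b) e_{bt^{-1}}$ and on the other $c \sum_b \alpha(t^{*-1})\chi(-at^{-1}, b) e_{b t^*}$; reindexing $b \mapsto bt^*$ on the left and comparing, the identity collapses to property~(b) (rewritten as $\chi(-a, bt^*) = \alpha(t^{*-1}t^{*-1*}{}^{-1})\chi(-at^{-1}, b)$ — more precisely one uses~(b) in the form relating $\chi(mt^{-1}, v)$ to $\chi(m, vt^{*-1})$ via $\alpha$), after noting $\alpha(t^{*-1})\alpha(tt^*)^{\pm1}$ bookkeeping matches because $\alpha$ is a character and $(tt^*)^* = tt^*$.

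The main obstacle, and where most of the real work lies, is the last relation $w u_{t^{-1}} w u_{-t} w u_{t^{-1}} = h_{-t}$ for antisymmetric invertible $t$. Applying the left-hand side to $e_a$ produces a triple sum over $M$ with a product of three $\gamma$-factors and three $\chi$-factors; the right-hand side is simply $\alpha(-t)e_{-at^{-1}}$ (interpreting $h_{-t}$ with $-t \in A^{\rm asym}$ invertible as a diagonal element, noting $(-t)^{*-1} = (-t)^{-1}\cdot(\text{sign})$ since $t$ is antisymmetric so $t^* = -t$, hence $t^{*-1} = -t^{-1}$). The computation is driven by property~(f), the "quadratic" cocycle identity $\gamma(t, m+z) = \gamma(t,m)\gamma(t,z)\chi(m,zt)$, which lets one complete squares inside the Gaussian sums; one collapses the inner two sums first using~(f) to turn $\sum_m \gamma(t^{-1}, m)\chi(\cdot, m)$ into a shifted Gauss sum, evaluates the resulting Gauss sum using property~(h), namely $\sum_{m\in M}\gamma(t,m) = \alpha(t)/c$ (and its translate-by-$\chi$ variant obtained by combining~(f) and~(h)), and then the factors of $c$, of $\alpha$, and the leftover $\chi$-phase must conspire — via $c^2|M|=1$, the multiplicativity of $\alpha$, and properties~(b)--(c) — to leave exactly $\alpha(-t)e_{-at^{-1}}$. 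Keeping track of the shifts in the summation variable and the accumulated quadratic phases is the delicate part; everything else is bookkeeping. Having checked all six relations, the universal property of the presentation gives the representation $\rho$, completing the proof.
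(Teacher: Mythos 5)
Your overall strategy is the right one and coincides with what the paper actually does: the paper offers no proof of this theorem, but simply invokes \cite[Theorem~4.4]{G-P-SA}, whose proof is exactly the relation-by-relation verification against the Bruhat presentation of Theorem~\ref{presentacion} that you outline, with properties (a)--(h) consumed essentially where you say they are (your checks of $h_{t_1}h_{t_2}=h_{t_1t_2}$, $u_{s_1}u_{s_2}=u_{s_1+s_2}$, $h_tu_s=u_{tst^*}h_t$ via (e), and $h_tw=wh_{t^{*-1}}$ via (b) are all correct in substance).

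Two caveats. First, your verification of $w^2=1$ contains a sign error that, as written, makes that step fail: from property (c) and bi-additivity one has $\chi(-b,b')=[\chi(b',-b)]^{-1}=\chi(b',b)$, so the product $\chi(-a,b)\chi(-b,b')$ telescopes to $\chi(b'-a,b)$ and the inner sum localizes at $b'=a$, giving $c^2|M|\,e_a=e_a$; your version, which localizes at $b'=-a$, would yield $\rho_w^2e_a=e_{-a}$ and hence would \emph{disprove} the relation. You implicitly treated $\chi$ as symmetric, whereas (c) makes it ``anti-symmetric'' in the multiplicative sense. Second, for the braid-type relation $wu_{t^{-1}}wu_{-t}wu_{t^{-1}}=h_{-t}$ --- the only relation whose verification requires real work --- you describe the shape of the computation (collapse the triple sum via (f), evaluate the resulting Gauss sums via (h), match constants via $c^2|M|=1$) but do not carry it out. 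Since every other relation is a one-line check, this is precisely the step a self-contained proof must contain, and it is exactly the place where sign and normalization slips of the kind noted above would or would not cancel. So: same approach as the (cited) source, correct in outline, but the decisive step is asserted rather than executed, and the one nontrivial step you did write out needs the fix above.
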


\subsection[Construction of  data for ${\rm SL}_*^1(2,A_n)$]{Construction of  data for $\boldsymbol{{\rm SL}_*^1(2,A_n)}$}\label{data}

Since we already know that  $\SL_*^1(2,A_n)$ has a Bruhat presentation, we construct now  a data for this group, in order to apply Theorem~\ref{theorem}.

Let $\psi_{0}$ be a nontrivial  additive  character of $K$  such that~$\psi_0$ is nontrivial in~$k$ and in~$\Delta k$.
We consider the biadittive function $\chi$ from $A_n\times A_n$ to~$\mathbb{C}^{\times}$ given by
\begin{gather*}
(a,b)\mapsto \psi(a^*b),
\end{gather*}
 where $\psi$ is the nontrivial character of~$A_n$ def\/ined as
\begin{gather*}
\psi\big(a_{o}+a_{1}x+\dots+a_{n-1}x^{n-1}\big)=\psi_{0}(a_{n-1}).
\end{gather*}
 We take $M=A_n$ and we assume $\alpha={\bf 1}$.
It is clear that a), b) and c) above are fulf\/illed. We prove now~d).

\begin{proposition}
The biadditive map $\chi$ is non-degenerate.
\end{proposition}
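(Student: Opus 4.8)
The plan is to show that $\chi(v,m)=1$ for all $m\in A_n$ forces $v=0$. Writing $v=\sum_{i=0}^{n-1}v_ix^i$, the condition $\chi(v,m)=\psi(v^*m)=1$ for every $m\in A_n$ means that $\psi$ vanishes identically on the set $v^*A_n$, which is an ideal of $A_n$ (since $A_n$ is commutative). So the first step is to identify, in terms of the coefficients of $v$, exactly when $\psi$ kills the ideal generated by $v^*$; equivalently, since $*$ is an automorphism and $v\mapsto v^*$ is a bijection on $A_n$, when $\psi$ kills $wA_n$ for $w=v^*$.

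The key structural fact is that $A_n=K[x]/\langle x^n\rangle$ is a local ring whose ideals are exactly $\langle x^j\rangle$ for $0\le j\le n$, and a nonzero element $w$ of valuation $j$ (i.e., $w=u x^j$ with $u$ a unit) generates precisely $\langle x^j\rangle$. Thus the second step: if $v\ne 0$, let $j<n$ be the valuation of $v^*$ (note $v^*\ne 0$ iff $v\ne 0$); then $v^*A_n=\langle x^j\rangle$, and in particular $x^{n-1}=x^{n-1-j}\cdot x^j\in v^*A_n$. But $\psi(c\,x^{n-1})=\psi_0(c)$ for $c\in K$, and $\psi_0$ is a nontrivial character of $K$, so we can choose $c\in K$ with $\psi_0(c)\ne 1$, i.e., $\psi$ does not vanish on $\langle x^j\rangle$. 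This contradicts $\psi(v^*A_n)=\{1\}$, so $v=0$.

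The third step is just to assemble these observations: non-degeneracy of $\chi$ is the statement that the only $v$ with $\psi(v^*m)=1$ for all $m$ is $v=0$, and by the valuation argument above any nonzero $v$ fails this because the top coefficient slot $x^{n-1}$ of $v^*A_n$ can be filled with an element on which $\psi_0$ is nontrivial. (The same argument applied with the roles of the two arguments swapped, using $\chi(m,v)=\psi(m^*v)$ together with $\chi(v,m)=\chi(m,v)^{-1}$ from property c), gives non-degeneracy on the other side, but since $A_n$ is commutative and $*$ bijective this is automatic.)

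I do not expect a serious obstacle here; the only point requiring a little care is making the reduction from ``$\psi$ vanishes on $v^*A_n$'' to ``$v^*A_n\subseteq\ker\psi$ is impossible unless $v^*=0$'' completely explicit, i.e., writing down which coefficient of $v$ (or rather $v^*$) is the lowest nonzero one and verifying that multiplying by the appropriate power of $x$ lands a nontrivial value of $\psi_0$ in the $x^{n-1}$ position. This is the heart of the matter and it hinges entirely on $\psi$ being defined via $\psi_0$ evaluated at the top coefficient together with $\psi_0$ being nontrivial on $K$ (the extra hypothesis that $\psi_0$ be nontrivial on both $k$ and $\Delta k$ is not needed for this particular statement, though it will matter later for property h)).
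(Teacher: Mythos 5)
Your proof is correct and is essentially the paper's own argument: the paper takes the first nonzero coefficient $a_i$ of the given element and tests against $b=tx^{n-i-1}$, which is exactly the witness your ideal-theoretic reduction (identifying $v^*A_n$ with $\langle x^j\rangle$ for $j$ the valuation of $v^*$, hence containing $cx^{n-1}$ for all $c\in K$) produces. Your side remark that only nontriviality of $\psi_0$ on $K$ is needed here is also accurate.
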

\begin{proof}
 Let $a$ be a nonzero element of $A_n$. We need to prove that there is an element~$b$ in~$A_n$ such that~$\chi(a,b)\neq 1$. Let us write $a=a_{o}+a_{1}x+\dots+a_{n-1}x^{n-1}$. If $a_i$ is the f\/irst nonzero coef\/f\/icient  of~$a$, set $b=tx^{n-i-1}$. Then
$\chi(a,b)=\psi_0((-1)^{i} \bar a_{i}t)$. If $t$ runs over $K$, then so  does $(-1)^{i}\bar a_{i}t$. Since the character $\psi_0$ is nontrivial, the result follows.
\end{proof}

We now def\/ine the function $\gamma$ by
$\gamma(t,m)=\chi(-2^{-1}tm,m)$, for $t\in A_n^{\rm asym}$ and $m$ in $A_n$.
We claim  that $\gamma$ satisf\/ies  conditions  e), f), g), and  h) above.

e)  Let $b\in A_n^{\rm asym}$, $m\in A_n$ and $t\in A_n^{\times}$. Then
\begin{gather*}
\gamma\big(tbt^*,mt^{-1}\big)= \chi\big({-}2^{-1}tbt^*mt^{-1},mt^{-1}\big)=\chi\big({-}2^{-1}bt^*m,mt^{-1}\big).
 \end{gather*}
 Now, using that $\chi$ is balanced, we get   $\gamma(tbt^*,mt^{-1})=\chi(-2^{-1}bm,m)=\gamma(b,m)$.

f) Let $t\in A_n^{\rm asym}$ and $m,v\in A_n$.  Then
\begin{gather*}
\gamma(t,m+v)=\chi\big({-}2^{-1}t(m+v),m+v\big)\\
\hphantom{\gamma(t,m+v)}{} =
 \chi\big({-}2^{-1}tm,m\big)\chi\big({-}2^{-1}tv,v\big)\chi\big({-}2^{-1}tm,v\big)\chi\big({-}2^{-1}tv,m\big).
\end{gather*}
Now, since $\chi$ is balanced, we have $\chi(-2^{-1}tm,v)=\chi(m,2^{-1}tv)$. On the other hand,   we have
$\chi(-2^{-1}tv,m)=\chi(m,2^{-1}tv)$.  Then $\chi(-2^{-1}tm,v)\chi(-2^{-1}tv,m)=\chi(m,vt)$, and
\begin{gather*}
\gamma(t,m+v)=\chi\big({-}2^{-1}tm,m\big)\chi\big({-}2^{-1}tv,v\big)\chi(m,vt).
\end{gather*}

 g)   $\gamma(b_1+b_2, m)=\gamma(b_1, m)\gamma(b_2, m)$ follows from the bi-additive property of~$\chi$.

 h)  Set $c=(-1)^n\frac{1}{q^n}$. Since $\vert A_n\vert=q^{2n}$, we have $c^2\vert A_n\vert=1$. The result will follow from Proposition~\ref{gauss} below, which needs Lemma~\ref{norm}:

\begin{lemma}\label{norm}
Let $\psi$ be the additive  character of $K$ defined above. Then
\begin{gather*}
\sum_{z\in K}\psi(\n_{K/k}(\lambda z))=\begin{cases} q^2 & \text{if} \ \ \lambda=0,  \\   -q  & \text{if} \ \ \lambda\neq 0.
\end{cases}
\end{gather*}
\end{lemma}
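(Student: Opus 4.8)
The statement to prove is Lemma~\ref{norm}: for the additive character $\psi$ of $K$ (which here I read as $\psi_0$, the nontrivial additive character of $K$ nontrivial on $k$ and on $\Delta k$), we have $\sum_{z\in K}\psi(\n_{K/k}(\lambda z)) = q^2$ if $\lambda = 0$ and $-q$ if $\lambda \neq 0$.

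The plan is as follows. The case $\lambda = 0$ is immediate: the summand is $\psi(0) = 1$ for all $q^2$ elements $z \in K$, giving $q^2$. For $\lambda \neq 0$, substitute $w = \lambda z$; since $z \mapsto \lambda z$ is a bijection of $K$, the sum becomes $\sum_{w \in K}\psi(\n_{K/k}(w))$, so it suffices to evaluate $S := \sum_{w\in K}\psi(\n_{K/k}(w))$ once. Now $\n_{K/k}\colon K \to k$ restricted to $K^\times$ is a surjective group homomorphism onto $k^\times$ whose fibers all have size $|K^\times|/|k^\times| = (q^2-1)/(q-1) = q+1$; the fiber over $0$ is just $\{0\}$. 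Hence
\begin{gather*}
S = \psi(0) + \sum_{w\in K^\times}\psi(\n_{K/k}(w)) = 1 + (q+1)\sum_{a\in k^\times}\psi(a).
\end{gather*}
Since $\psi$ (i.e.\ $\psi_0$) is nontrivial on $k$ by hypothesis, $\sum_{a\in k}\psi(a) = 0$, so $\sum_{a\in k^\times}\psi(a) = -1$. Therefore $S = 1 + (q+1)(-1) = -q$, which is exactly the claim.

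The key input is the counting fact that $\n_{K/k}$ is surjective onto $k^\times$ with all fibers of equal cardinality $q+1$ — this is standard for the norm map of a finite cyclic extension (the norm map of finite fields is surjective, and being a homomorphism of abelian groups its nonempty fibers are cosets of the kernel, hence equinumerous). One should also note the harmless but necessary point that the hypothesis ``$\psi_0$ nontrivial on $k$'' is what makes the character-sum $\sum_{a\in k}\psi_0(a)$ vanish; the additional assumption that $\psi_0$ is nontrivial on $\Delta k$ is not needed for this particular lemma (it will be used elsewhere).

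I do not anticipate a genuine obstacle here — the proof is a short orthogonality-of-characters computation once the norm map's fiber structure is in hand. The only mild care needed is notational: making sure that the $\psi$ appearing in the statement of Lemma~\ref{norm} is being used as the character $\psi_0$ on $K$ (the lemma says ``the additive character of $K$ defined above'', which is $\psi_0$), and confirming that $2$ is invertible so that the later application in Proposition~\ref{gauss} to Gauss-type sums involving $\gamma(t,m) = \chi(-2^{-1}tm,m)$ goes through; but that invertibility is guaranteed since $q$ is odd.
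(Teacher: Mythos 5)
Your proof is correct and follows essentially the same route as the paper's: reduce to $\lambda=1$ by the bijection $z\mapsto\lambda z$, partition $K$ by the fibers of the norm map (each nonzero fiber having $q+1$ elements), and use the vanishing of the nontrivial character sum over $k$ to get $1+(q+1)(-1)=-q$. Your remark that the lemma's $\psi$ should be read as $\psi_0$ is a fair notational catch, but the argument itself matches the paper's.
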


\begin{proof}
The case $\lambda=0$ is clear. Suppose then $\lambda \neq 0$. Since $z $ runs over $K$ it  suf\/f\/ices to compute the sum $\sum\limits_{z\in K}\psi(\n_{K/k}( z))$. We observe
\begin{gather*}
K=\bigcup_{t\in k}\n_{K/k}^{-1}(t).
\end{gather*}
Since the character $\psi$ is nontrivial on~$k$ and the cardinality of $\n_{K/k}^{-1}(t)$ is $q+1$ for any $t\in k^{\times}$, we have
\begin{gather*}
\sum_{z\in K}\psi(\n_{K/k}(\lambda z))=\psi(0) + \sum_{t\in k^{\times}}(q+1)\psi(t)=1+(q+1)(-1)=-q.\tag*{\qed}
\end{gather*}\renewcommand{\qed}{}
\end{proof}

Now,
 \begin{proposition}\label{gauss}  We have
\begin{gather*}
\sum_{y\in A_n}\gamma(t,y)=(-1)^n q^n.
\end{gather*}
\end{proposition}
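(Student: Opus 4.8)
The quantity to compute is $\sum_{y\in A_n}\gamma(t,y)=\sum_{y\in A_n}\chi(-2^{-1}ty,y)=\sum_{y\in A_n}\psi\big((-2^{-1}ty)^*y\big)=\sum_{y\in A_n}\psi\big(-2^{-1}t^* y^* y\big)$, and since $t$ is antisymmetric, $t^*=-t$, so this is $\sum_{y\in A_n}\psi\big(2^{-1}t\, y^*y\big)$. The plan is to write $y=\sum_{i=0}^{n-1}y_i x^i$ with $y_i\in K$, so that the ``new'' coefficients $z_j=y_j$ are free parameters ranging over $K$, and to track only the coefficient of $x^{n-1}$ in $2^{-1}t\,y^*y$, since $\psi$ reads off precisely that coefficient. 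Here $y^*=\sum_i(-1)^i\overline{y_i}x^i$, so $y^*y=\sum_m\big(\sum_{i+j=m}(-1)^i\overline{y_i}y_j\big)x^m$; in particular the coefficient of $x^{n-1}$ in $y^*y$ involves $\sum_{i+j=n-1}(-1)^i\overline{y_i}y_j$, and after multiplying by $2^{-1}t$ (an antisymmetric element, hence of the form $t=t_0+t_1x+\cdots$ with $t_0\in\Delta k$ when nonzero at the constant term) we collect the $x^{n-1}$-coefficient of the product.

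The key computational step is a change of variables that diagonalizes the resulting quadratic/sesquilinear form in the $y_i$. I would proceed by an inductive ``completion of squares'' over the coefficients, or equivalently observe that the $x^{n-1}$-coefficient of $2^{-1}t\,y^*y$, as a function of $(y_0,\dots,y_{n-1})$, splits: the ``cross'' terms pairing $y_i$ with $y_{n-1-i}$ for $i<n-1-i$ can be summed first — for each such pair, fixing all other coordinates and letting $y_i$ range over $K$ makes the exponent run over a full additive character argument, giving $0$ unless the coefficient of $y_i$ vanishes, which pins down $y_{n-1-i}$ — so after summing these off one is left with a sum over the ``middle'' coordinate(s) of a term of the shape $\psi_0\big(c\cdot \n_{K/k}(\text{linear in the middle }y)\big)$ for a suitable nonzero constant $c\in k$ depending on $t$. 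Concretely, since $t$ antisymmetric invertible forces (by Proposition~\ref{card}(3)) its constant term $t_0$ to lie in $\Delta k\setminus\{0\}$, the surviving contribution is governed by a single norm form $\n_{K/k}$ evaluated over $K$.

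At that point Lemma~\ref{norm} finishes the job: the leftover sum is exactly $\sum_{z\in K}\psi\big(\n_{K/k}(\lambda z)\big)$ for some nonzero $\lambda$ (coming from $2^{-1}t_0$), which equals $-q$; combined with the $q$-powers contributed by the free ``vanishing'' coordinates — each eliminated pair of coordinates $(y_i,y_{n-1-i})$ contributes a factor $q^2$ from one free variable after the other is forced, and one checks the bookkeeping so that the total is $(-1)^nq^n$ — we get the claimed value. The case analysis on the parity of $n$ enters here: when $n$ is even there is no middle coordinate and the answer comes out as a pure power of $q$ times $(-1)^n=1$ only if one is careful about signs (the factor $-q$ from Lemma~\ref{norm} must appear an even number of times, or be compensated); when $n$ is odd there is exactly one middle coordinate $y_{(n-1)/2}$ and the single application of Lemma~\ref{norm} produces the sign $-q$, matching $(-1)^n=-1$.

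\textbf{Main obstacle.} The delicate part is the precise bookkeeping in the diagonalization: making rigorous the claim that summing over each off-diagonal pair $(y_i,y_{n-1-i})$ kills the sum unless a linear constraint holds, and that these constraints are consistent and independent, so the surviving sum is genuinely a single (or no) norm-form sum with the right nonzero coefficient. One must be attentive to the factor $2^{-1}$ (invertible since $q$ is odd), to the alternating signs $(-1)^i$ in $y^*$, and to which coefficients of $t$ actually contribute to the $x^{n-1}$-coefficient of the product — in general all of $t_0,\dots,t_{n-1}$ could appear, but the argument should show that only the lowest-order piece $t_0$ survives after the off-diagonal sums are carried out, because the higher $t_i$ only ever multiply products $y^*_a y_b$ with $a+b<n-1$, whose contributions get absorbed into the linear constraints. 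Getting the sign $(-1)^n$ exactly right is the other place where care is needed, and I would verify it against the small cases $n=1$ (where $A_1=K$, $\gamma(t,y)=\psi_0(2^{-1}t\,\bar y y)=\psi_0(\n_{K/k}(\mu y))$ after absorbing constants, giving $-q=(-1)^1q^1$) and $n=2$ before trusting the general induction.
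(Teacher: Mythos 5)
Your plan is essentially the paper's proof: reduce to the $x^{n-1}$-coefficient of $2^{-1}t\,y^{*}y$, pair the coordinates $y_i$ and $y_{n-1-i}$ so that each pair contributes $q^2$ (the paper sums over the highest-index variable first, which forces its partner to vanish and thereby shows cleanly that only the constant term $t_0\in\Delta k\setminus\{0\}$ survives), and for $n$ odd handle the single middle coordinate with the norm-sum Lemma~\ref{norm}, which produces the sign $(-1)^n$. The bookkeeping you flag as the main obstacle is resolved in the paper precisely by that choice of summation order, so your approach is correct and matches the paper's.
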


\begin{proof}
By def\/intion $\gamma (t,y)=\chi (-{\frac{1}{2}} ty,y)$.
We set $y=\sum\limits_{i=0}^{n-1}\alpha_{i}x^i$ and $-\frac {1}{2}t=\sum\limits_{i=0}^{n-1}\lambda_{i}x^i$ for $y\in{A_n}$ and $t\in A^{{\rm asym}^{\times}}$.  We will write sometimes $(y)_i$ for the coef\/f\/icient of~$x^i$ in~$y$.
 Then
\begin{gather*}
\sum _{y\in{A_n}}\gamma(t,y)=\sum_{y\in A_n }\chi \left(-{\frac{1}{2}} ty,y\right)=
\sum_{y\in A_n}\psi\left(-\frac{1}{2}ty^{\ast}y\right) =
\sum_{i=1}^{n}\psi_{0}\left(\left(-\frac{1}{2}ty^{\ast}y\right)_{n-1}\right).
\end{gather*}
Since $t^{\ast}=-t$ we have $\lambda_i=d_i\in k$ for $i$ odd, and
$\lambda_i=d_i\Delta \in k{\Delta }$ for  $i $ even.

We will split the proof into two cases, according to $n$ is even or odd.
We f\/irst assume   that $n$ is even.
Using  the above, and after a computation, we have
%$(\bullet)$
\begin{gather}\label{eq1}
 \sum_{y\in{A_n}}\gamma(t,y) = \sum_{\alpha_{i},i=0}^{n-1}
\psi_{0}(  d_{n-1}\beta_{0}+\Delta d_{n-2}\beta_{1}+\dots+ d_{1}\beta_{n-2}+ \Delta d_{ 0}\beta_{n-1}  ) ,
\end{gather}
where $\beta_i= \sum\limits_{j=0}^{i} (-1)^{j}\bar {\alpha}_j\alpha_{i-j}$.

We f\/irst observe  that $\alpha_{n-1}$ appears only  in $\beta_{n-1}$. Summing f\/irst over $\alpha_{n-1}$, we write
\begin{gather*}
\sum \cdots \sum\limits_{\alpha_0,\alpha_{n-1}}\cdots \psi_0(\Delta d_0(\bar \alpha_0\alpha_{n-1}-\bar \alpha_{n-1}\alpha_0)
\\
\qquad {}=
 \sum\cdots  \sum_{\alpha_0,\alpha_{n-1}}\cdots
 \psi_0(\Delta d_0(\bar \alpha_0\alpha_{n-1}-
\overline{\overline  {\alpha_{0}}{\alpha_{n-1}}})).
\end{gather*}

Given the assumption on $\psi_0,$ the character that sends
$\alpha_{n-1}$ into $\psi_0( \Delta d_{0}(\bar \alpha_0\alpha_{n-1}-\overline{\bar \alpha_0\alpha_{n-1}}))$
 is  an additive,  nontrivial character of~$K$. This sum is zero unless  $\alpha_0=0$. Then the sum over $\alpha_0$ and $\alpha_{n-1}$ contributes with~$q^2$.
In this way, the above sum becomes
\begin{gather*}
\sum\cdots \sum\limits_{\alpha_1,\alpha_{n-2}}\cdots q^2\psi_0(d_{n-3}\gamma_1+\Delta d_2\gamma_2+\cdots +d_1\gamma_{n-3}+\Delta d_0\gamma_{n-2}),
\end{gather*}
 where $\gamma_i=\sum\limits_{j=1}^{i}(-1)^j\bar \alpha _j\alpha_{i-j+1}$.
We note again that, in this case, $\alpha_{n-2}$ appears only in $\gamma_{n-2},$ and we can write the sum as
\begin{gather*}
\sum\cdots \sum\limits_{\alpha_1,\alpha_{n-2}}\cdots q^2\psi_0(\Delta d_0(-\bar \alpha_1\alpha_{n-2}+\bar \alpha_{n-2}\alpha_1)).
\end{gather*}
We sum next over $\alpha_1$ and $\alpha_{n-2},$ and we continue with this process
  to obtain at the end that~\eqref{eq1} is $(q^2)^{ \frac{n}{2}}=q^{n}$.

We assume this time $n$ is odd. In this case, we have
%$(\bullet \bullet)$
\begin{gather}\label{eq2}
 \sum_{y\in{A_n}}\gamma(t,y)=\sum_{\substack{\alpha_{i}\\i=0,\dots,n-1}}
\psi_{0}( \Delta d_{n-1}\beta_{0}+ d_{n-2}\beta_{1}+\dots+\Delta d_{1}\beta_{n-2}+  d_{ 0}\beta_{n-1}  ).
\end{gather}

As before, $\alpha_{n-1}$ appears only  in  $\beta_{n-1}$. Summing  f\/irst over $\alpha_0$ and $\alpha_{n-1},$ to get
\begin{gather*}
\sum\cdots \sum\limits_{\alpha_0,\alpha_{n-1}}\cdots \psi_0(\Delta d_0(\bar \alpha_0\alpha_{n-1}+\bar \alpha_{n-1}\alpha_0)\\
\qquad {}=\sum\cdots \sum_{\alpha_0,\alpha_{n-1}}\cdots \psi_0(\Delta d_0(\bar \alpha_0\alpha_{n-1}+\overline{\overline  {\alpha_{0}}{\alpha_{n-1}}}).
\end{gather*}
We have $\psi_0(\Delta d_0(\bar \alpha_0\alpha_{n-1}+\overline{\overline  {\alpha_{0}}{\alpha_{n-1}}})
=\psi_0(\Delta d_0 \operatorname{Tr}((\bar \alpha_0\alpha_{n-1}))$, where ${\rm Tr}$ stands for the f\/ield trace of the extension $E \supset F$.

Arguing as before,  at the last step, i.e., after
$\frac{n-1}{2}$ steps
we get
\begin{gather*}
\big(q^2\big)^ { \frac{n-1}{2}   } \sum\limits_{\alpha_{\frac{n-1}{2}}}\psi_0\big(\Delta \overline{\alpha_{\frac{n-1}{2}}}{\alpha_{\frac{n-1}{2}}}\big).
\end{gather*}
Using Lemma~\ref{norm}, we obtain \eqref{eq2} is equal to~$-q^n$.
\end{proof}

 \section[A first decomposition]{A f\/irst decomposition}\label{section5}

We will get a f\/irst decomposition of~$\rho$, constructed in Theorem~\ref{theorem}. To this end, we lean on Theorem~7.6 in~\cite{G-P-SA}. The unitary group  $U=U(\chi,\gamma,c)$ consisting of all $A_n$-linear automor\-phism~$\varphi$ of~$M$ such that $\gamma(b,\varphi(x))=\gamma(b,x)$, for any $b\in A_n$ and $x\in M $, allows us to obtain a decomposition of the Weil representation. In fact, the characters of~$U$ def\/ine the invariant subspaces of a~decomposition of~$\rho$. So, we will devote ourselves to obtain the structure of this group. Observing f\/irst that $U$ is abelian, Theorem~7.6 in~\cite{G-P-SA} reads as:

If $\Lambda \in \widehat{U}$, let $W_{\Lambda}$ be the vector subspace of $W$ of the $\Lambda$-homogeneous functions, that is,  the vector subspace of the functions $f\in W$ such that $f(ua)=\Lambda(u)f(a)$, for $a\in A_n$ and $u\in U$, then

\begin{theorem}
The Weil representation $(W,\rho)$ is the direct sum of all  $W_{\Lambda}$, where $\Lambda$ runs  over all linear characters of~$U$.
\end{theorem}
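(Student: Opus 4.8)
The plan is to show that the action of the finite abelian group $U$ on the representation space $W=\mathbb{C}^{M}$ commutes with $\rho$, so that $W$ decomposes $U$-equivariantly into isotypic components, and that these components are exactly the spaces $W_{\Lambda}$. First I would verify that each $\varphi\in U$ induces a linear operator $\pi_{\varphi}$ on $W$ by $\pi_{\varphi}(e_a)=e_{\varphi(a)}$ (or its inverse, to get an honest left action), and that $\varphi\mapsto\pi_{\varphi}$ is a representation of $U$. Since $U$ is abelian and finite, $W=\bigoplus_{\Lambda\in\widehat U}W_{\Lambda}$ as $U$-modules, where $W_{\Lambda}=\{f\in W:\pi_{\varphi}f=\Lambda(\varphi)f\ \forall\varphi\in U\}$; unwinding the definition of $\pi_{\varphi}$, this is precisely the space of functions with $f(ua)=\Lambda(u)f(a)$, matching the statement.

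The substance is then to check that $\pi_{\varphi}$ commutes with $\rho_{u_b}$, $\rho_{h_t}$ and $\rho_{w}$ for all generators, so that each $W_{\Lambda}$ is $\rho$-invariant. For $\rho_{u_b}$: using the formula $\rho_{u_b}(e_a)=\gamma(b,a)e_a$ and the defining property $\gamma(b,\varphi(a))=\gamma(b,a)$ of $U$, one gets $\pi_{\varphi}\rho_{u_b}(e_a)=\gamma(b,a)e_{\varphi(a)}=\gamma(b,\varphi(a))e_{\varphi(a)}=\rho_{u_b}(e_{\varphi(a)})=\rho_{u_b}\pi_{\varphi}(e_a)$. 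For $\rho_{h_t}$: since $\varphi$ is $A_n$-linear, $\varphi(at^{-1})=\varphi(a)t^{-1}$, and with $\alpha=\mathbf{1}$ we have $\pi_{\varphi}\rho_{h_t}(e_a)=e_{\varphi(a)t^{-1}}=\rho_{h_t}(e_{\varphi(a)})=\rho_{h_t}\pi_{\varphi}(e_a)$. For $\rho_w$: here I would need the compatibility of $\varphi$ with $\chi$. This does not hold for an arbitrary $A_n$-linear isometry of $\gamma$ a priori, so the key point is to deduce $\chi(\varphi(m),\varphi(v))=\chi(m,v)$ from $\gamma(b,\varphi(x))=\gamma(b,x)$. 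Using the explicit formula $\gamma(t,m)=\chi(-2^{-1}tm,m)$ together with the polarization identity established in the proof of property f), namely $\gamma(t,m+v)=\gamma(t,m)\gamma(t,v)\chi(m,vt)$, applied to both $(m,v)$ and $(\varphi(m),\varphi(v))$, one obtains $\chi(m,vt)=\chi(\varphi(m),\varphi(v)t)$ for every antisymmetric invertible $t$; since such $t$ generate $A_n$ additively (e.g.\ $1=v-(v-1)$ with $v\in\Delta k\setminus\{0\}$, both antisymmetric invertible by Lemma~\ref{inv2}-type arguments) and $\chi$ is biadditive, this gives $\chi(m,vt)=\chi(\varphi(m),\varphi(v)t)$ for all $t\in A_n$, hence $\chi(m,v)=\chi(\varphi(m),\varphi(v))$. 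Then $\pi_{\varphi}\rho_w(e_a)=c\sum_{b\in M}\chi(-a,b)e_{\varphi(b)}=c\sum_{b'\in M}\chi(-a,\varphi^{-1}(b'))e_{b'}=c\sum_{b'\in M}\chi(-\varphi(a),b')e_{b'}=\rho_w(e_{\varphi(a)})=\rho_w\pi_{\varphi}(e_a)$, where the reindexing uses that $\varphi$ is a bijection of $M$ and the $\chi$-invariance just proved.

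Having shown $\pi_{\varphi}\rho_g=\rho_g\pi_{\varphi}$ on generators, and hence on all of $G$, each isotypic subspace $W_{\Lambda}$ is $\rho(G)$-stable, and $W=\bigoplus_{\Lambda\in\widehat U}W_{\Lambda}$ is a decomposition of $(W,\rho)$ into subrepresentations, which is the assertion. I expect the main obstacle to be precisely the step recovering $\chi$-invariance of $\varphi$ from $\gamma$-invariance: one must make sure the antisymmetric invertible elements do additively generate $A_n$ (so that biadditivity of $\chi$ finishes the job) and handle the role of the fixed $2^{-1}$ carefully; the rest is a routine check on the three generators using the formulas of Theorem~\ref{theorem} and the defining relation of $U$. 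One may alternatively invoke Theorem~7.6 of~\cite{G-P-SA} directly once $U$ has been identified as above, but writing out the commutation with $\rho_w$ is the crux in any case.
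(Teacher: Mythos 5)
The paper itself does not prove this statement: it is quoted directly from Theorem~7.6 of~\cite{G-P-SA} (after observing that $U$ is finite abelian), so your attempt to write out an actual proof goes beyond the text. Your strategy is the right one --- realize $U$ on $W=\mathbb{C}^{M}$ by $\pi_{\varphi}(e_a)=e_{\varphi(a)}$, split $W$ into $\widehat U$-isotypic pieces, and check commutation of $\pi_{\varphi}$ with $\rho$ on the three Bruhat generators --- and your verifications for $\rho_{u_b}$ and $\rho_{h_t}$ are correct.

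However, the step you yourself flag as the crux, deducing $\chi(\varphi(m),\varphi(v))=\chi(m,v)$ from $\gamma$-invariance, contains a genuine error. You assert that the antisymmetric invertible elements generate $A_n$ additively, citing $1=v-(v-1)$ with $v\in\Delta k\setminus\{0\}$. This is false: $(v-1)^{*}=-v-1\neq -(v-1)$, so $v-1$ is not antisymmetric; more structurally, $A_n^{\rm asym}$ is an additive subgroup (a $k$-subspace of index $q^{n}$ in $A_n$), so no integer combination of antisymmetric elements can ever equal~$1$. Consequently biadditivity extends the identity $\chi(m,vt)=\chi(\varphi(m),\varphi(v)t)$ only to $t\in A_n^{\rm asym}$, which determines $m^{*}v-\varphi(m)^{*}\varphi(v)$ only modulo the annihilator of $A_n^{\rm asym}$ under $\psi$ --- a subgroup of order $q^{n}$, not $\{0\}$ --- so the argument as written does not close. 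The repair is easy and uses the $A_n$-linearity of $\varphi$, which you did not exploit at this point: take the single antisymmetric invertible element $t=\Delta$ and replace $v$ by $v\Delta^{-1}$; since $\varphi(v\Delta^{-1})=\varphi(v)\Delta^{-1}$, the identity $\chi\big(m,(v\Delta^{-1})\Delta\big)=\chi\big(\varphi(m),\varphi(v\Delta^{-1})\Delta\big)$ collapses to $\chi(m,v)=\chi(\varphi(m),\varphi(v))$ for all $m,v\in M$. With that substitution your computation for $\rho_w$, and hence the whole proof, goes through.
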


We f\/irst prove:

\begin{proposition}  \label{action}
We have
\begin{enumerate}\itemsep=0pt
\item[$1.$] The group $A_n^{\times}$ acts transitively on $ A_n^{\rm sym}\cap A_n^{\times}$  by $a\cdot t=ata^*$.
\item[$2.$] The group of units $A_n^{\times}$   acts transitively on $ A_n^{\rm asym}\cap A_n^{\times}$ under the same action.\end{enumerate}
\end{proposition}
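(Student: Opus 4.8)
The plan is to reduce both transitivity statements to a normal-form argument for units of $A_n$ together with the cardinality count from Proposition~\ref{card}. First I would observe that the action $t\mapsto ata^*$ of $A_n^\times$ does preserve $A_n^{\rm sym}$ and $A_n^{\rm asym}$: if $t^*=\pm t$ then $(ata^*)^* = a t^* a^* = \pm ata^*$, and since $a$ is a unit the element $ata^*$ is again a unit when $t$ is. So in each case we have a genuine action of $A_n^\times$ on a finite set, namely $A_n^{\rm sym}\cap A_n^\times$ in part~1 and $A_n^{\rm asym}\cap A_n^\times$ in part~2.

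For part~1, the key step is to show that every symmetric unit lies in the orbit of the identity $1$ (which is symmetric and a unit). Write a symmetric unit as $t = t_0 + t_1 x + \cdots + t_{n-1}x^{n-1}$ with $t_0\in k^\times$ (the constant term must be a nonzero element of $k$ by Proposition~\ref{card}(2)). I would first handle the constant term: since $t_0\in k^\times$ and the norm $N_{K/k}\colon K^\times\to k^\times$ is surjective, pick $a_0\in K^\times$ with $a_0 a_0^* = a_0\overline{a_0} = t_0$, so that $a_0\cdot 1 = t_0 + (\text{higher order})$. Then proceed inductively: given that the orbit of $1$ contains some $t' = t_0 + \cdots$ agreeing with $t$ up to degree $m-1$, I would find a unit $a = 1 + c x^{m}$ (or a suitable perturbation) with $a\cdot t'$ agreeing with $t$ up to degree $m$; solving for the needed coefficient amounts to a linear equation over $k$ (for $m$ even) or over $\Delta k$ (for $m$ odd) in one unknown, which is solvable because the relevant coefficient of $t'$ is already a unit factor. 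Iterating up to $m = n-1$ shows $t$ is in the orbit of $1$, hence the action is transitive on $A_n^{\rm sym}\cap A_n^\times$.

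For part~2, I would use the same strategy but with base point $\Delta \in A_n^{\rm asym}\cap A_n^\times$ (nonzero, antisymmetric since $\Delta^* = \overline{\Delta} = -\Delta$, and a unit). The same degree-by-degree correction argument applies: the orbit of $\Delta$ under $a\cdot t = ata^*$ fills out all antisymmetric units, since at each degree the solvability of the correction equation only uses that the leading coefficient of the current approximant is invertible. Alternatively, once part~1 is known one can note $\Delta^{-1}\cdot(\,\cdot\,)$ intertwines the two actions on $A_n^{\rm sym}\cap A_n^\times$ and $A_n^{\rm asym}\cap A_n^\times$ once one checks $A_n^{\rm asym}\cap A_n^\times = \Delta\cdot(A_n^{\rm sym}\cap A_n^\times)$ — but I would be careful, since $\Delta$ is central so $\Delta t$ antisymmetric iff $t$ symmetric, and this identification is cleanest. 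The main obstacle I anticipate is bookkeeping in the inductive step: verifying that the single-variable correction equation at degree $m$ is always solvable (i.e., has the coefficient in the correct subfield $k$ or $\Delta k$ to match the target's symmetry type, and that the coefficient one divides by is genuinely invertible in $A_n$). This is where the hypothesis $q$ odd (so $2$ is invertible, and the norm map is surjective) and the precise description of $A_n^{\rm sym}$, $A_n^{\rm asym}$ from Proposition~\ref{card} are used; everything else is formal.
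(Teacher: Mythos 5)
Your proposal is correct and takes essentially the same route as the paper: both arguments work along the $x$-adic filtration (the paper packages this as solving the triangular system $t=bb^{*}$ for a normalized symmetric unit and using that ${\rm Orb}(1)$ is closed under products, while you fix the constant term via norm surjectivity and then correct one coefficient at a time), and both reduce part~2 to part~1 by multiplying by the central antisymmetric unit $\Delta$. The solvability of your one-variable correction equation indeed comes down to surjectivity of $c\mapsto c+(-1)^{m}\bar{c}$ onto $k$ (resp.\ $\Delta k$), which holds since $2$ is invertible, exactly as you anticipate; no gaps.
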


\begin{proof}
1.~We  consider the case when $n$ is even.  Since the ring $A_n$  is commutative,  we see that if $t_1$, $t_2$ are in the orbit ${\rm Orb}(1)$, then $t_1t_2\in {\rm Orb}(1)$.  We will f\/irst prove that every element of the form $t=1+a_1\Delta x+a_2x^2+\cdots +a_{n-1}\Delta x^{n-1}$  ($a_i\in k$, for all~$i$) belongs to  ${\rm Orb}(1)$.

In fact, one can check that  there is an element $b=1+b_1\Delta x+b_2x^2+\cdots +b_{n-1}\Delta x^{n-1}\in A_n$ $(b_i \in k),$ such that $t=bb^*$,  given that the system
\begin{gather*}
1=1,\\
2b_1= a_1\\
2b_2+b_1^2\Delta^2=a_2,\\
2b_3+2b_1b_2=a_3,\\
2b_4+2b_1b_3\Delta^2+b_2^2=a_4,\\
\cdots\cdots\cdots\cdots\cdots\cdots\cdots\cdots\\
2b_{n-1}+2b_1b_{n-2}+2b_2b_{n-3}+\cdots+2b_{n-2}b_2=a_{n-1}
\end{gather*}
has always a solution.

Now, for an arbitrary invertible symmetric element
$t=a_0+a_1\Delta x+a_2x^2+\cdots +a_{n-1}\Delta x^{n-1}$, given that any nonzero element of $k$ is in ${\rm Orb}(1)$,     we have that
\begin{gather*}
t=a_0\big(1+a_0^{-1}a_1\Delta x+a_0^{-1}a_2x^2+\cdots +a_0^{-1}a_{n-1}\Delta x^{n-1}\big)
\end{gather*}
 belongs to ${\rm Orb}(1)$ and therefore ${\rm Orb}(1)=A_n^{\rm sym}\cap A_n^{\times}$.

The case when $n$ is odd is handled in a similar way.

2.~Notice that $\Delta a\in A_n^{\rm asym}\cap A_n^\times$, for any symmetric invertible element $a\in A_n$. Then, it follows from part~$1$ that  $\Delta (A_n^{\rm sym}\cap A_n^{\times} )=\Delta {\rm Orb}(1)$ is a subset of   $A_n^{\rm asym}\cap A_n^{\times}$ that has the same cardinality than $A_n^{\rm asym}\cap A_n^{\times}$. We have that the orbit of~$\Delta$ is the unique orbit for the action.

This proves the proposition.
\end{proof}

 Next we start to prove that the group $U=U(\chi,\gamma,c)$ is isomorphic to the group of all $a\in  A_n^{\times}$ such that   $aa^*=1$. We f\/irst prove
\begin{lemma}\label{order}
The subgroup $ \{a \in A_n\colon aa^*=1\} $ has $(q+1)q^{n-1}$ elements.
\end{lemma}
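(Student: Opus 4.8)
The plan is to count the elements of the group $N=\{a\in A_n^\times \colon aa^*=1\}$ by exhibiting it as the kernel of a norm-type homomorphism and computing the image. First I would define $\nu\colon A_n^\times \to A_n^{\mathrm{sym}}\cap A_n^\times$ by $\nu(a)=aa^*$; since $A_n$ is commutative and $*$ is a ring involution, $\nu(ab)=ab(ab)^*=ab b^* a^*=\nu(a)\nu(b)$, so $\nu$ is a homomorphism of abelian groups with kernel exactly $N$. Hence $|N|=|A_n^\times|/|\operatorname{Im}\nu|$, and by Proposition~\ref{card}(1) we have $|A_n^\times|=(q^2-1)q^{2(n-1)}$, so everything reduces to computing $|\operatorname{Im}\nu|$.

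Next I would identify $\operatorname{Im}\nu$. The target $A_n^{\mathrm{sym}}\cap A_n^\times$ has cardinality $|A_n^{\mathrm{sym}}\cap A_n^\times| = q^{n}-q^{n-1}=(q-1)q^{n-1}$: by Proposition~\ref{card}(2), $|A_n^{\mathrm{sym}}|=q^n$, and a symmetric element $a=\sum a_i x^i$ is invertible iff $a_0\neq 0$, which removes a fraction $1/q$ (the constant term $a_0$ ranges over $k$, and invertibility requires $a_0\in k^\times$). Now the key observation is that $\nu$ actually \emph{surjects} onto $A_n^{\mathrm{sym}}\cap A_n^\times$: this is essentially Proposition~\ref{action}(1), which asserts that $A_n^\times$ acts transitively on $A_n^{\mathrm{sym}}\cap A_n^\times$ via $a\cdot t=ata^*$; taking $t=1$ shows that the orbit of $1$, namely $\{a\cdot 1 \colon a\in A_n^\times\}=\{aa^* \colon a\in A_n^\times\}=\operatorname{Im}\nu$, equals all of $A_n^{\mathrm{sym}}\cap A_n^\times$. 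Therefore $|\operatorname{Im}\nu|=(q-1)q^{n-1}$ and
\[
|N|=\frac{(q^2-1)q^{2(n-1)}}{(q-1)q^{n-1}}=(q+1)q^{n-1},
\]
as claimed.

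The one point requiring care—and the step I expect to be the main obstacle—is justifying the surjectivity cleanly, i.e., making sure the appeal to Proposition~\ref{action}(1) is legitimate here. That proposition states transitivity of the action $a\cdot t = ata^*$ on $A_n^{\mathrm{sym}}\cap A_n^\times$, and transitivity is equivalent to the single orbit $\mathrm{Orb}(1)$ filling the whole set; since $\mathrm{Orb}(1)$ is by definition $\{aa^* \colon a\in A_n^\times\}=\operatorname{Im}\nu$, the identification is immediate and no new computation is needed. Alternatively, if one prefers a self-contained argument, one can reprove surjectivity directly by the same Hensel-type successive-approximation scheme used in the proof of Proposition~\ref{action}(1): given a target symmetric unit $t=a_0+a_1\Delta x+\cdots$, solve for $b=b_0+b_1 x+\cdots$ with $bb^*=t$ coefficient by coefficient, the leading equation $b_0\bar b_0 = a_0$ being solvable because the norm $N_{K/k}\colon K^\times\to k^\times$ is surjective, and each subsequent equation being linear in the next unknown $b_i$ with invertible coefficient $2b_0$ (here $2$ is invertible since $q$ is odd). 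Either way the proof concludes by the index computation above.
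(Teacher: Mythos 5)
Your proof is correct and is essentially the same as the paper's: the paper applies the orbit--stabilizer theorem to the action $a\cdot t=ata^*$, noting that $\mathrm{Stab}(1)=\{a\colon aa^*=1\}$ and that $\mathrm{Orb}(1)=A_n^{\mathrm{sym}}\cap A_n^\times$ has $(q-1)q^{n-1}$ elements by Proposition~\ref{action}, which is precisely your kernel/image computation for $\nu(a)=aa^*$ phrased in group-action language. No substantive difference.
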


\begin{proof}
By proof of Proposition \ref{action}, the cardinality of  orbit ${\rm Orb}(1)$ under the action of $A_n^{\times}$  on
$A_n^{\rm sym}\cap A_n^{\times}$ is $(q-1)q^{n-1}$. The isotropy group ${\rm Stab}(1)$ is~$U$, hence
\begin{gather*}
 \vert \{a \in A_n \colon aa^*=1\}\vert =\dfrac{\vert A_n^{\times}\vert}{\vert {\rm Orb}(1)\vert}=\dfrac{(q^2-1)q^{2(n-1)}}{(q-1)q^{n-1}}=(q+1)q^{n-1}.
 \tag*{\qed}
 \end{gather*}
\renewcommand{\qed}{}
\end{proof}

\begin{proposition}
$U(\chi,\gamma,c)\cong \{a \in A_n \colon aa^*=1\}$.
\end{proposition}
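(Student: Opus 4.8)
The plan is to exhibit an explicit isomorphism between the two groups and verify it is well-defined, injective, and surjective. Since $M = A_n$ as a right $A_n$-module, every $A_n$-linear automorphism $\varphi$ of $M$ is given by left multiplication $\varphi(m) = am$ for a unique $a \in A_n^\times$ (namely $a = \varphi(1)$), and composition of such maps corresponds to multiplication of the $a$'s; so the association $\varphi \mapsto \varphi(1)$ identifies the group of all $A_n$-linear automorphisms of $M$ with $A_n^\times$. Under this identification, the defining condition $\gamma(b,\varphi(m)) = \gamma(b,m)$ for all $b \in A_n^{\rm asym}$, $m \in A_n$ becomes a condition on $a = \varphi(1)$, and the claim is precisely that this condition is equivalent to $aa^* = 1$.

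First I would unwind the condition using the explicit formula $\gamma(b,m) = \chi(-2^{-1}bm, m) = \psi(-2^{-1}b m^* m)$ (recall $b^* = -b$, so $(bm)^* m = m^* b^* m = -m^* b m$; care with signs here, using bi-additivity and the balanced property of $\chi$). Then $\gamma(b,\varphi(m)) = \gamma(b,am) = \psi(-2^{-1} b (am)^*(am)) = \psi(-2^{-1} b m^* a^* a m)$, while $\gamma(b,m) = \psi(-2^{-1} b m^* m)$. So the equality for all $b \in A_n^{\rm asym}$ and all $m \in A_n$ is equivalent to
\begin{gather*}
\psi\big(b\, m^*(a^*a - 1)m\big) = 1 \quad \text{for all } b \in A_n^{\rm asym},\ m \in A_n.
\end{gather*}
Taking $m = 1$ already gives $\psi(b(a^*a-1)) = 1$ for all $b \in A_n^{\rm asym}$; I claim this forces $a^*a = 1$. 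Indeed $a^*a$ is symmetric, so $a^*a - 1$ is symmetric; if it were nonzero, write $a^*a - 1 = c_i x^i + \cdots$ with $c_i$ its first nonzero coefficient. Choosing $b = s x^{n-1-i}$ with $s$ a suitable element of $k$ or $\Delta k$ (so that $b$ is antisymmetric and $b(a^*a-1)$ has leading coefficient $s c_i$ times a unit), and using that $\psi_0$ is nontrivial on both $k$ and $\Delta k$, one produces a $b$ with $\psi(b(a^*a-1)) \neq 1$ — this is the same non-degeneracy argument as in the proof that $\chi$ is non-degenerate. Hence $a^*a = 1$. Conversely, if $a^*a = 1$ then $m^*(a^*a-1)m = 0$ for every $m$, so the condition holds trivially; thus $\varphi \in U$ iff $\varphi(1)^*\varphi(1) = 1$.

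It remains to check that $a \mapsto (m \mapsto am)$ lands in $\operatorname{Aut}_{A_n}(M)$ when $aa^* = 1$: such $a$ is a unit (with inverse $a^*$), so left multiplication is bijective and $A_n$-linear on the right. This gives a well-defined group homomorphism $\{a \in A_n : aa^* = 1\} \to U(\chi,\gamma,c)$; it is injective since $a$ is recovered as the image of $1$, and surjective by the equivalence just established. Therefore $U(\chi,\gamma,c) \cong \{a \in A_n : aa^* = 1\}$. I expect the only delicate point to be the bookkeeping of signs and the use of the balanced/bi-additive properties of $\chi$ when rewriting $\gamma(b,am)$ — once that is set up correctly, the reduction to $a^*a=1$ and the non-degeneracy argument are routine, and Lemma~\ref{order} is consistent with this identification (it was in fact proved by computing $|\operatorname{Stab}(1)| = |U|$).
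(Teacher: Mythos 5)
Your proposal is correct, and the reduction at the start (identifying an $A_n$-linear automorphism $\varphi$ with the unit $a=\varphi(1)$, so that the unitarity condition becomes a statement about $d=a^*a$) is exactly the paper's first step. Where you genuinely diverge is in proving $d=1$: the paper keeps $y$ general, writes out the coefficient of $x^{n-1}$ in $dtyy^*$ versus $tyy^*$, and runs an induction over carefully chosen invertible antisymmetric $t$ and truncated $y$ to kill $d_{n-1}, d_{n-2},\dots$ one at a time before extracting $d_0=1$. You instead specialize $m=1$ immediately, reducing everything to the statement that the pairing $(b,u)\mapsto\psi(bu)$ between $A_n^{\rm asym}$ and $A_n^{\rm sym}$ is non-degenerate, which you settle by the same one-line monomial trick already used for the non-degeneracy of $\chi$ (the parity bookkeeping works out: $sx^{n-1-i}$ antisymmetric forces $s\in k$ or $s\in\Delta k$, and then $sc_i$ sweeps $k$ or $\Delta k$, where $\psi_0$ is nontrivial by hypothesis). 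This is shorter and cleaner than the paper's coefficient induction. The one point you should make explicit: your chosen $b=sx^{n-1-i}$ is generally a non-invertible antisymmetric element, whereas the paper's proof only invokes the unitarity condition for \emph{invertible} antisymmetric $b$. This costs nothing --- property g) (additivity of $\gamma$ in its first argument) together with the fact that every antisymmetric element of $A_n$ is a sum of two invertible antisymmetric ones (shift the constant term by a nonzero element of $\Delta k$, as in Lemma~\ref{inv2}) shows the two versions of the condition are equivalent --- but a sentence to that effect would close the gap between your hypothesis and the one the paper actually uses.
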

\begin{proof}

We have  $U(\chi,\gamma,c)$ consists of all  $A_n$-automorphisms $\varphi \colon A_n \rightarrow A_n$  such that  $\gamma(b,\varphi(y))=\gamma(b,y)$ for all $b\in {A_n^{\rm asym}}^\times$ and  $y\in A_n$. Hence  $\varphi(1)$ determines completely $\varphi$.

Now, the def\/inition of $\gamma$ implies
$\psi(-\frac{1}{2}b\varphi(y)\varphi(y)^*)=\psi(-\frac{1}{2}byy^*)$ and so
$\psi(-\frac{1}{2}byy^*\varphi(1)\varphi(1)^*)$ $=\psi(-\frac{1}{2}byy^*)$.
Setting $t=-\frac{1}{2}b $ and $d=\varphi(1)\varphi(1)^*$ we have in the notations of Section~\ref{data}
\begin{gather}
 \psi(dtyy^*)=\psi(tyy^*) \label{eq1+}
\end{gather}
for all $t\in {A_n^{\rm asym}}^\times$ and  $y\in A_n$,
from which
$\psi_0((dtyy*)_{n-1})=\psi_0((tyy^*)_{n-1})$.
We will prove that $d=1$. To do that set $d=d_0+d_1x+\dots+d_{n-1}x^{n-1}$.
Given that $d$ is symmetric, $d_i \in k$ for $i$  even, and $d_i \in k\Delta$ for $i$ odd. Now, since   $t$ is antisymmetric, we see
$t=\lambda_0\Delta+\lambda_1x+\lambda_2\Delta x^2+\lambda_3x^3+\lambda_4\Delta x^4+\cdots)$ with $\lambda_i \in k$ and $\lambda_0\neq 0$.
We write $y=y_0+y_1x+\cdots+y_{n-1}x^{n-1}$. We want to prove then  $d_0=1$ and $d_i=0$ for $i>0$.
Now, the equation~\eqref{eq1+} implies
\begin{gather*}
\psi_0(d_0\lambda_0\Delta(\bar y_0y_{n-1}-\bar y_1y_{n-2}+\cdots +(-1)^{n-1}\bar y_{n-1}y_0)+ (d_0\lambda_1(\bar y_0y_{n-2}-\bar y_1y_{n-3}+ \cdots\\
\qquad\quad{} +(-1)^{n-2}\bar y_{n-2}y_0)+\cdots + d_{n-2}\lambda_1\bar y_0y_0+d_{n-1}\lambda_0\Delta \bar y_0y_0) \\
\qquad{} =\psi_0(\lambda_0\Delta(\bar y_0y_{n-1}-\bar y_1y_{n-2}+\cdots +(-1)^{n-1}\bar y_{n-1}y_0)+\cdots\\
\qquad\quad{} +\lambda_{n-2}\Delta^{\delta_1}(\bar y_0y_1-\bar y_1y_0)+  \lambda_{n-1}\Delta^{\delta_2})
\end{gather*}
 ($\delta_1=1$ if $n$ is even, and $0$ otherwise; $\delta_2=0$ if $n$ is even, and $1$ otherwise).

If we take $\lambda_0= 1$, $\lambda_i=0$ if $i>0$, and taking $y_0$ arbitrary, and $y_i=0$ for $i>0$ we end up with
  $\psi_0(d_{n-1}\Delta \bar y_0y_0)=1$.  By the choice of~$\psi_0$, it follows that $d_{n-1}= 0$.
 For the  next step (to prove now that $d_{n-2}=0$)  we take $\lambda_0=\lambda_1=1$ and $\lambda _i=0$  for $i>1$.
  As before, we set $y_i=0$ for $i>0$ and $y_0$ arbitrary, getting this  time
 $\psi_0(d_{n-2}\Delta \bar y_0y_0)=1$. So, $d_{n-2}=0$.

 Continuing with this process, we obtain $d_i=0$ for all $i>0$.
 Finally, considering $\lambda_0=1$ and $\lambda_i=0$ for $i>0$, we are left with
\begin{gather*}
\psi_0\big(d_0\Delta\big(\bar y_0y_{n-1}-\bar y_1y_{n-2}+\cdots +(-1)^{n-1}\bar y_{n-1}y_0\big)\big)\\
\qquad{}=
 \psi_0\big(\Delta\big(\bar y_0y_{n-1}- \bar y_1y_{n-2}+
\cdots +(-1)^{n-1}\bar y_{n-1}y_0\big)\big),
\end{gather*}
from which it follows $d_0=1$.
\end{proof}

\begin{proposition}  \label{uo}
We have
\begin{enumerate}\itemsep=0pt
\item[$1.$] $U\simeq \n_{K/k}^{-1}(1)\times U_0$ where $U_0=\{b\colon b\in U,\, (b)_0=1\}$,  $(b)_0$ as defined in Proposition~{\rm \ref{gauss}}.

\item[$2.$]
\begin{itemize}\itemsep=0pt
\item  If $n$ is even, then the group $U_0$  consists of all elements $z$ of the form:
\begin{gather*}
z=1 +\lambda_1x+(f_1(\lambda_1)+\lambda_2\Delta)x^2+(\lambda_3+f_2(\lambda_1,\lambda_2)\Delta)x^3+\cdots\\
 \hphantom{z=}{} +(\lambda_{n-1}+f_{n-2}(\lambda_1,\dots,\lambda_{n-2})\Delta)x^{n-1},
\end{gather*}
 with $\lambda_i\in k$.
\item  If $n$ is odd,   then the group $U_0$  consists of all elements $z$ of the form:
\begin{gather*}
z=1  +\lambda_1x+(f_1(\lambda_1)+\lambda_2\Delta)x^2+(\lambda_3+f_2(\lambda_1,\lambda_2)\Delta)x^3+\cdots\\
\hphantom{z=}{} +(f_{n-2}(\lambda_1,\dots,\lambda_{n-2})+\lambda_{n-1}\Delta)x^{n-1},
\end{gather*}
where $\lambda_i\in k$.
\end{itemize}

\end{enumerate}
\end{proposition}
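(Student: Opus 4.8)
The plan is to treat the two parts in turn. For part~1, I would consider the map $\pi\colon U\to K$ sending $z=\sum_i z_ix^i$ to its constant term $(z)_0=z_0$. Since $A_n$ is commutative, $\pi$ is a ring homomorphism (it is the quotient $A_n\to A_n/\langle x\rangle\cong K$), and because $(z^*)_0=\overline{z_0}$ one has $(zz^*)_0=z_0\overline{z_0}=\n_{K/k}(z_0)$; hence $\pi$ restricts to a group homomorphism $U\to\n_{K/k}^{-1}(1)$ whose kernel is exactly $U_0=\{b\in U\colon (b)_0=1\}$. To split this sequence, note that every scalar $a\in K$ with $\n_{K/k}(a)=1$ already lies in $U$ (for such $a$ one has $a^*=\overline a$, so $aa^*=\n_{K/k}(a)=1$), and $\pi$ is the identity on these scalars; thus $\n_{K/k}^{-1}(1)$ embeds in $U$ as a subgroup mapping isomorphically onto the image of $\pi$ and meeting $U_0$ trivially. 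As $U$ is abelian, this yields the internal direct product $U=\n_{K/k}^{-1}(1)\times U_0$, which is part~1; comparing orders with Lemma~\ref{order} and $|\n_{K/k}^{-1}(1)|=q+1$ also gives $|U_0|=q^{n-1}$.

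For part~2, write a general element of $U_0$ as $z=1+z_1x+\dots+z_{n-1}x^{n-1}$; the defining relation $zz^*=1$ is equivalent to
\[
z_m+(-1)^m\overline{z_m}+\sigma_m=0,\qquad \sigma_m:=\sum_{i=1}^{m-1}(-1)^{m-i}z_i\,\overline{z_{m-i}},
\]
for $1\le m\le n-1$ (using $z_0=1$). I would run an induction on $m$: suppose $z_1,\dots,z_{m-1}$ have already been forced into the claimed shape, written in terms of free parameters $\lambda_1,\dots,\lambda_{m-1}\in k$, and examine the $m$-th equation. Pairing the summands $i\leftrightarrow m-i$ in $\sigma_m$ gives $\overline{\sigma_m}=(-1)^m\sigma_m$, so $\sigma_m$ is a $k$-valued polynomial in $z_1,\dots,z_{m-1}$ when $m$ is even and a $\Delta k$-valued one when $m$ is odd. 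If $m$ is even the equation becomes $\operatorname{Tr}_{K/k}(z_m)=-\sigma_m$, which (since $q$ is odd) pins the $k$-component of $z_m$ to $-\sigma_m/2$ — a polynomial in $\lambda_1,\dots,\lambda_{m-1}$ — and leaves the $\Delta$-component as a brand-new free parameter $\lambda_m$; if $m$ is odd the equation becomes $z_m-\overline{z_m}=-\sigma_m$, which determines the $\Delta$-component of $z_m$ and leaves its $k$-component free, $\lambda_m\in k$.

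Iterating up to $m=n-1$ produces precisely the two normal forms in the statement — the top exponent $n-1$ being odd exactly when $n$ is even, which interchanges the roles of the determined slot ($f_{n-2}(\lambda_1,\dots,\lambda_{n-2})$) and the free slot in the leading coefficient. The procedure simultaneously exhibits a bijection between $U_0$ and $k^{n-1}$ via $(\lambda_1,\dots,\lambda_{n-1})$, consistent with the count $|U_0|=q^{n-1}$ obtained in part~1. I expect the only real difficulty to be organisational: carrying the recursion cleanly and verifying the identity $\overline{\sigma_m}=(-1)^m\sigma_m$, which is what guarantees that at each step exactly one of the two components of $z_m$ is determined while the other genuinely remains free; the commutativity of $A_n$ and the invertibility of $2$ in $k$ are used throughout.
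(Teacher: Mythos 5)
Your proposal is correct and follows essentially the same route as the paper: part 1 via the constant-term homomorphism split by the scalar subgroup $\n_{K/k}^{-1}(1)$ (which the paper dismisses as immediate from the definitions), and part 2 by comparing coefficients in $zz^*=1$ and showing inductively that for each $m$ exactly one of the two $k$-components of $z_m$ is determined while the other remains a free parameter $\lambda_m$. Your identity $\overline{\sigma_m}=(-1)^m\sigma_m$ is just a cleaner packaging of what the paper verifies degree by degree before asserting the general pattern, so the arguments coincide.
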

\begin{proof} Part 1 follows directly from the def\/initions.

 We prove  part~2.
If $b=r+c\Delta$, we will write $r=\operatorname{Real}(b)\in k$ and $c=\operatorname{Im}(b)\in k$.
  Expanding
\begin{gather*}
\big(1+b_1x+b_2x^2+\cdots+b_{n-1}x^{n-1}\big)\big(1-\bar{b}_1x+\bar{b}_2x^2-\cdots\pm \bar{b}_{n-1}x^{n-1}\big)=1,
\end{gather*}
    we see  that $b_1\in k$. We set $\lambda_1=b_1$.

   Similarly, we get $b_2+\bar b_2=b_1\bar b_1$
   so $\operatorname{Real}(b_2)=\frac{\lambda_1^2}{2}$. We see $\operatorname{Im}(b_2)$ is  an element of~$k$, independent of $\lambda_1$. Setting $\lambda_2=\operatorname{Im}(b_2)$, we can write $b_2=f_1(\lambda_1)+\lambda_2\Delta$ with $f_1(\lambda_1)$  a function of $\lambda_1$ which is independent from~$\lambda_2$.

In the next step, we obtain $b_3-\bar b_3=b_2\bar b_1-b_1\bar b_2,$ from which $\operatorname{Im}(b_3)=2\lambda_1\lambda_2$. We observe $\operatorname{Real}(b_3)$ is an element of $k$ independent from $\lambda_1
$ and $\lambda_2$. We write this time $b_3=\lambda_3+f_2(\lambda_1,\lambda_2)\Delta$, for a function $f_2(\lambda_1,\lambda_2)$  independent from $\lambda_3$.

In general, when $i$ is even, $b_i+\bar b_i$ is a $k$-valued function $f_i(\lambda_1,\dots,\lambda_{i-1})$,  and  $\operatorname{Im}(b_i)=f_{i-1}(\lambda_1,\dots, \lambda_{i-1})$ is a new variable, getting $b_i=\lambda_i$.
When~$i$ is odd,  $b_i-\bar b_i$   determines a $k$-valued function, we set this time  $\operatorname{Im}(b_i)=f_{i-1}(\lambda_1,\dots,\lambda_{i-1})$ and $\operatorname{Real}(b_i)=f_{i-1}(\lambda_1,\dots, \lambda_{i-1}$ for a new variable independent from   $ \lambda_1,\dots,\lambda_{i-1}$. We can write $b_i=\lambda_i+  f_{i-1}(\lambda_1,\dots,\lambda_{i-1})\Delta$.      The result now follows.
\end{proof}

\begin{remark}
We observe that in fact the functions $f_i$ have the property
\begin{gather*}
f_i(0,0,\dots,0)=0.
\end{gather*}
\end{remark}

  The f\/ield $k=\mathbb F_q$ (where $q=p^t$, $p$ an odd prime number) is a $t$-dimensional vector space over~$\mathbb F_p$. We set $e_1,\dots,e_t$ for a basis of~$k$ over~$\mathbb F_p$.

  We describe the elements of $U_0$ as in Proposition~\ref{uo}, and we def\/ine:

\begin{definition}
Let $i$ be relatively prime to $p$, and $l=1,\dots,t$. $H_{i,l}$ denotes the cyclic  subgroup of $U_0$ of order $d=d_{i,l}$ ($d$ is a~power of~$p$), generated by
$z=1+e_lx^i+\alpha_2x^{2i}+\cdots+\alpha_{\operatorname{ord}(z)}x^{\operatorname{ord}(z)i}$   for $i$ odd, and generated by
 $z=1+e_l\Delta x^i+\alpha_2\Delta^2 x^{2i}+\cdots +\alpha_{\operatorname{ord}(z)}\Delta^{\operatorname{ord}(z)}x^{\operatorname{ord}(z)i}$ for $i$ even, where $\operatorname{ord}(z)$ is the integer such that $\operatorname{ord}(z)i<n,$ but $(\operatorname{ord}(z)+1)i\geq n$, and
 $\alpha_j$ are certain elements in~$k$.
\end{definition}

 We determine now the elements~$\alpha_i$ for~$i$ odd (the other case being similar).
 The condition
\begin{gather*}
\big(1+e_lx^i+\alpha_2x^{2i}+\cdots\big)\big(1+e_lx^i+\alpha_2x^{2i}+\cdots\big)^*=1
\end{gather*}
leads to the system (with $\alpha_1=e_l$)
\begin{gather*}
1=1,\\
2\alpha_2-\alpha_1^2= 0,\\
0=0,\\
2\alpha_4-2\alpha_3   +\alpha_2^2=0,\\
\cdots\cdots\cdots\cdots \cdots\cdots\\
\sum_{s=1}^{j} (-1)^{s-j}\alpha_s\alpha_{s-j}=0,
\end{gather*}
 which has the solution $\alpha_2=\frac{1}{2}\alpha_1^2$,
$\alpha_j=0$ for~$j$ odd and for $j$ even greater than~$2$. So, $\alpha_j$ is computed inductively by $2\alpha_j +\sum\limits_t^{\frac{j-2t}{2}}{ \alpha_{2t}\alpha_{j-2t}}=0$.

 \begin{proposition} \label{intersection}
The intersection of    $\prod\limits_{v,u} H_{v,u} $ with the subgroup $H_{i_0,l_0}$ is~$1$  $[1\leq v\leq n-1$, $v$~relatively prime  to~$p$, $v> i_0$  if   $u=l_0$,  and   $v\geq i_0$  if   $u\neq l_0$, $u=1,\dots,t]$.
\end{proposition}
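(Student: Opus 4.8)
The plan is to exploit the grading structure of $U_0$ visible in Proposition~\ref{uo}: an element $z=1+\sum_{j\ge 1}\beta_j x^j$ of $U_0$ has a well-defined \emph{leading exponent} $\ell(z)$, namely the smallest $j\ge 1$ with $\beta_j\neq 0$, together with the \emph{leading coefficient} $\beta_{\ell(z)}$, which (by the description in Proposition~\ref{uo} and the Definition of $H_{i,l}$) lies in $k$ when $\ell(z)$ is odd and in $\Delta k$ when $\ell(z)$ is even. First I would record the elementary but crucial multiplicativity fact: if $z,z'\in U_0$ with $\ell(z)=\ell(z')=m$ and leading coefficients $c,c'$, then either $c+c'\neq 0$, in which case $\ell(zz')=m$ with leading coefficient $c+c'$, or $c+c'=0$, in which case $\ell(zz')>m$. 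Iterating, any nontrivial element of the cyclic group $H_{i_0,l_0}$ has leading exponent a multiple of $i_0$ that is $<n$; and since $p\nmid i_0$, a generator $z_0$ of $H_{i_0,l_0}$ with leading coefficient $e_{l_0}$ (resp.\ $e_{l_0}\Delta$) has all powers $z_0^{a}$ with $1\le a<d_{i_0,l_0}$ of leading exponent exactly $i_0$ — because $a$ is not divisible by $p$, so $a\cdot e_{l_0}\neq 0$ in $k$. Thus \emph{every} nontrivial element of $H_{i_0,l_0}$ has leading exponent exactly $i_0$ and leading coefficient a nonzero $\mathbb F_p$-multiple of $e_{l_0}$ (in $k$ or $\Delta k$ according to parity of $i_0$).

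Next I would analyze an arbitrary nontrivial element $w$ of the product $\prod_{v,u}H_{v,u}$ over the indicated index set. Writing $w=\prod h_{v,u}$ with $h_{v,u}\in H_{v,u}$, let $m_0$ be the minimal leading exponent occurring among the nontrivial factors $h_{v,u}$. By the constraint on the index set, each such $v$ satisfies either $v>i_0$ (when $u=l_0$) or $v\ge i_0$ (when $u\neq l_0$); in the latter case, if $v=i_0$ then $u\neq l_0$ and the leading coefficient of $h_{i_0,u}$ is a nonzero $\mathbb F_p$-multiple of $e_u$. Grouping the factors of minimal leading exponent $m_0$ and summing their leading coefficients: if $m_0>i_0$ then already $\ell(w)\ge m_0>i_0$; if $m_0=i_0$ then all contributing factors are of the form $h_{i_0,u}$ with $u\neq l_0$, so their leading coefficients are $\mathbb F_p$-linear combinations of the $e_u$ with $u\neq l_0$, and since $e_1,\dots,e_t$ is an $\mathbb F_p$-basis of $k$ the sum is either $0$ (then $\ell(w)>i_0$) or a nonzero element of the $\mathbb F_p$-span of $\{e_u:u\neq l_0\}$, which does \emph{not} lie in $\mathbb F_p^{\times}e_{l_0}$. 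Hence in all cases a nontrivial $w\in\prod_{v,u}H_{v,u}$ has either $\ell(w)>i_0$, or $\ell(w)=i_0$ with leading coefficient outside $\mathbb F_p^{\times}e_{l_0}$ (in the appropriate one of $k$, $\Delta k$).

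Finally I would combine the two computations: by the previous paragraph every nontrivial element of $H_{i_0,l_0}$ has leading exponent exactly $i_0$ and leading coefficient in $\mathbb F_p^{\times}e_{l_0}$, while every nontrivial element of $\prod_{v,u}H_{v,u}$ fails one of those two conditions. Therefore the intersection contains no nontrivial element, i.e.\ $\prod_{v,u}H_{v,u}\cap H_{i_0,l_0}=1$, as claimed. The main obstacle — and the only place where real care is needed — is the bookkeeping in the middle step: one must be sure that when collecting the minimal-degree terms from a product of many factors $h_{v,u}$ the only way to reach leading exponent $i_0$ is through factors $h_{i_0,u}$ with $u\neq l_0$ (the hypothesis on the index set is exactly designed to exclude $h_{i_0,l_0}$ itself and any factor of smaller leading exponent), and that the $\mathbb F_p$-independence of $e_1,\dots,e_t$ then prevents the accumulated leading coefficient from landing in $\mathbb F_p^{\times}e_{l_0}$. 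The parity/$\Delta$ dictated form of leading coefficients in Proposition~\ref{uo} guarantees we never compare an element of $k$ with one of $\Delta k$, so the two cases (in $k$ for $i_0$ odd, in $\Delta k$ for $i_0$ even) are handled uniformly.
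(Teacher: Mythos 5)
Your leading-exponent framework is the right idea and your analysis of the product $\prod_{v,u}H_{v,u}$ is essentially sound, but there is a genuine gap on the $H_{i_0,l_0}$ side. You claim that every nontrivial element of $H_{i_0,l_0}$ has leading exponent exactly $i_0$, ``because $a$ is not divisible by $p$''. That justification is unfounded: the exponent $a$ ranges over all of $1,\dots,d_{i_0,l_0}-1$, and $d_{i_0,l_0}$ is a power of $p$ that can exceed $p$ (for instance $i_0=1$ and $n>p$ give $d_{1,l_0}\geq p^2$), so $a$ may well be divisible by $p$; the hypothesis $p\nmid i_0$ says nothing about $a$. For $a=p^{c}s$ with $c\geq 1$ and $p\nmid s$, your own multiplicativity lemma (or the Frobenius identity $(1+y)^{p^{c}}=1+y^{p^{c}}$ in the commutative ring $A_n$) shows that $z_0^{a}=1+s\,e_{l_0}^{p^{c}}x^{p^{c}i_0}+\cdots$ has leading exponent $p^{c}i_0>i_0$, contradicting your blanket claim.

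This breaks the concluding step: your dichotomy for the product only rules out elements with leading exponent equal to $i_0$ and coefficient in $\mathbb{F}_p^{\times}e_{l_0}$, while explicitly allowing elements of the product with leading exponent strictly greater than $i_0$ --- and these could a priori coincide with the unaccounted-for elements $z_0^{p^{c}s}$ of $H_{i_0,l_0}$. To close the gap you must, as the paper does, run the comparison at the level of exponents $p^{c}i_0$: a nontrivial element of $H_{i_0,l_0}$ has its leading term at exponent $p^{c}i_0$ with coefficient a nonzero multiple of (a Frobenius image of) $e_{l_0}$, and on the product side a leading exponent $p^{c}i_0$ can only arise from factors in groups $H_{v,u}$ with $p^{c'}v=p^{c}i_0$; since $v$ and $i_0$ are prime to $p$ this forces $v=i_0$, hence $u\neq l_0$ by the index constraint, and the $\mathbb{F}_p$-independence of $e_1,\dots,e_t$ then yields the contradiction. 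Once restated this way your argument goes through, but as written the proof does not cover the elements of $H_{i_0,l_0}$ whose order divides $d_{i_0,l_0}/p$ times a unit, i.e.\ the $p$-th and higher power layers of the cyclic group.
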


\begin{proof}
Let $H_{i_0,l_0}$ with $i_0$  relatively prime to~$p$.  We prove the case when~$i_0$ is odd (the case when $i_0$ is even is similar).

Let $z\in H_{i_0,l_0}$.  Then
$z= (1+e_{l_0}x^{i_0}+\alpha_2 x^{2i_0} +\cdots+\alpha_{\operatorname{ord}(z)}x^{\operatorname{ord}(z)i_0})^j$, where  $1\leq {j}\leq d$ ($d$~is a~power of $p$) is such that $i_0d>n$.
 We have $j=p^a s$, where $a>0$, $\operatorname{g.c.d.}(s,p)=1$ if $p$ divides $j$, and $a=0$ if $j=s$ is relatively prime to~$p$. Then, we can write
 $z=1+s_0e_{l_0}x^{p^a i_0}+ \text{terms of higher degree}$ ($s\equiv s_0$ (${\rm mod}\, p)$, $1\leq s_0<p$).

If $z$ is also an element of the product above, then
\begin{gather*}
z=\big(1+\beta_1e_{l_1} x^{p^{c_1}i_{l_1}}+\cdots\big)\big(1+\beta_2e_{l_2} x^{p^{c_2}i_{l_2}}+\cdots\big)\cdots ,
\end{gather*}
($1\leq\beta_r<p$, $r=1,\dots,m$) for some $m$, and $i_r>i_0$ if $l_r=l_0$, $i_r\geq i_0$ if $l_r\neq l_0$.
So, the lower degree term of $z$,  as an element of the product must be of the form $(\beta_1e_{l_1} +\beta_2e_{l_2}+\cdots +\beta_me_{l_m})x^{p^{a}i_0}$  with $   p^{c_1}i_1=\cdots =p^{c_m}i_m=p^{a}i_0$. But then  $\beta_1e_{u_1} +\beta_2e_{u_2}+\cdots +\beta_me_{u_m}=s_0e_{l_0}$.

We have two possible cases according to whether $l_0$ appears in the factors for~$z$ (as an element of a product  of~$H$'s) or not.
In the f\/irst case,  since the~$e$'s are linearly independent, we must have (reordering if necessary) $\beta_1=s_0$, $\beta_2=\cdots =\beta_m=0$ and $l_1=l_0$. The equality $p^{c_1}i_1=p^{a}i_0$ is contradictory because $i_1>i_0$, and $i_0$ and~$i_1$  are relatively prime to~$p$.
In the second case, we would have linear dependency between the~$e$'s.

From here, the result follows.
\end{proof}

The next lemmas are direct result of the def\/inition of the integer part function.

\begin{lemma}
Let $a$, $b$ be positive integers such that $a<b$, then the number of multiples of $p$ in the interval $]a,b]$ is
$\lfloor \frac{b}{p}\rfloor-\lfloor \frac{a}{p}\rfloor$.
\end{lemma}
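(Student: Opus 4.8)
The plan is to identify the multiples of $p$ in the interval $]a,b]$ with a block of consecutive integers whose size is visibly $\lfloor b/p\rfloor-\lfloor a/p\rfloor$. First I would write every multiple of $p$ uniquely as $pk$ with $k\in\mathbb{Z}$, so that the map $k\mapsto pk$ puts the set $\{m\in\mathbb{Z}\colon a<m\leq b,\ p\mid m\}$ in bijection with $\{k\in\mathbb{Z}\colon a<pk\leq b\}$; since $p>0$, the condition $a<pk\leq b$ is equivalent to $a/p<k\leq b/p$, so it suffices to count the integers lying in the half-open real interval $(a/p,\,b/p]$.

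Next I would count the integers in a general interval $(\alpha,\beta]$ with $\alpha<\beta$ real. For an integer $k$, the condition $k>\alpha$ is equivalent to $k\geq\lfloor\alpha\rfloor+1$, and the condition $k\leq\beta$ is equivalent to $k\leq\lfloor\beta\rfloor$. Hence the integers in $(\alpha,\beta]$ are exactly $\lfloor\alpha\rfloor+1,\lfloor\alpha\rfloor+2,\dots,\lfloor\beta\rfloor$, which is a block of $\lfloor\beta\rfloor-\lfloor\alpha\rfloor$ consecutive integers (empty precisely when $\lfloor\beta\rfloor=\lfloor\alpha\rfloor$, in which case the count is indeed $0$). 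Applying this with $\alpha=a/p$ and $\beta=b/p$ gives that the number of multiples of $p$ in $]a,b]$ equals $\lfloor b/p\rfloor-\lfloor a/p\rfloor$.

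There is no real obstacle here; the one point that deserves a line of care is the boundary behaviour of the floor, namely that $k>\alpha\iff k\geq\lfloor\alpha\rfloor+1$ holds whether or not $\alpha$ is itself an integer (if $\alpha\in\mathbb{Z}$ then $\lfloor\alpha\rfloor+1=\alpha+1$, and $k>\alpha$ with $k$ an integer does force $k\geq\alpha+1$). Once this is checked, the statement follows, as noted, directly from the definition of the integer part function.
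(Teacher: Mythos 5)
Your proof is correct; the paper itself offers no argument for this lemma, simply declaring it (together with the following one) a ``direct result of the definition of the integer part function.'' Your bijection $k\mapsto pk$ followed by the count of integers in $(a/p,\,b/p]$ via $\lfloor\beta\rfloor-\lfloor\alpha\rfloor$ is exactly the standard elementary verification the authors are implicitly invoking, including the correct handling of the boundary case where $a/p$ is an integer.
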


\begin{lemma}
Let $n$, $p$ be positive integers with $p$ a prime element. Then
\begin{gather*}
\left\lfloor \dfrac{\big\lfloor\frac{n}{p^j}\big\rfloor}{p}\right\rfloor=\left\lfloor\dfrac{n}{p^{j+1}}\right\rfloor.
\end{gather*}
\end{lemma}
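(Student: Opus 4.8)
The plan is to reduce the identity to the elementary fact that $\lfloor \lfloor x\rfloor/m\rfloor = \lfloor x/m\rfloor$ for every real number $x$ and every positive integer $m$, applied to $x = n/p^j$ and $m = p$. Granting this, the left-hand side of the lemma equals $\lfloor \lfloor n/p^j\rfloor/p\rfloor = \lfloor (n/p^j)/p\rfloor = \lfloor n/p^{j+1}\rfloor$, which is the right-hand side.

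To justify the auxiliary fact, I would argue in one line: writing $\lfloor x\rfloor = qm + r$ with $0\le r < m$, the chain $qm \le \lfloor x\rfloor \le x < \lfloor x\rfloor + 1 \le qm + m$ yields $q \le x/m < q+1$, hence $\lfloor x/m\rfloor = q = \lfloor \lfloor x\rfloor/m\rfloor$.

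Alternatively, one can avoid real numbers entirely and stay within integer arithmetic, which fits the paper's phrasing (``a direct result of the definition of the integer part function''): set $m = \lfloor n/p^j\rfloor$, and use the division algorithm twice, $n = mp^j + s$ with $0\le s < p^j$ and $m = pt + u$ with $0\le u < p$. Substituting gives $n = tp^{j+1} + (up^j + s)$, where $0\le up^j + s \le (p-1)p^j + (p^j-1) = p^{j+1} - 1 < p^{j+1}$, so by uniqueness of the quotient, $\lfloor n/p^{j+1}\rfloor = t = \lfloor m/p\rfloor = \lfloor \lfloor n/p^j\rfloor/p\rfloor$.

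There is no genuine obstacle; the only points to keep in view are that the primality of $p$ plays no role (the identity holds for any positive integer $p$), and that all quantities appearing are nonnegative integers, so the floor manipulations and the appeals to the division algorithm are legitimate.
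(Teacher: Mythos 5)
Your proof is correct. The paper offers no argument at all for this lemma (it simply declares it a ``direct result of the definition of the integer part function''), and your two justifications --- the general identity $\lfloor\lfloor x\rfloor/m\rfloor=\lfloor x/m\rfloor$ and the purely integer version via two applications of the division algorithm --- are both complete and standard ways of supplying the omitted verification; your remark that primality of $p$ is irrelevant is also accurate.
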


We assume $n$ is relatively prime to $p$. Consider the maximal integer $r$ such that $n=p^{r-1}m+R$, with $0<R<p^{r-1}$.

\begin{proposition} \label{prop10}
The cardinality of the subgroup $H_{i,u}$,  for $u=1,\dots,t$, where $i$ belongs to the interval $\big]\big\lfloor \frac{n}{p^{j}}  \big\rfloor,\big\lfloor \frac{n}{p^{j-1}}  \big\rfloor\big]$ and~$i$ is relatively prime to~$p$, is~$p^j$.
\end{proposition}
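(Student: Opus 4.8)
The plan is to determine the order of the cyclic group $H_{i,u}$ by tracking how high a power of the generator $z = 1 + e_u x^{i} + (\text{higher order terms})$ we can take before it collapses to $1$ in $A_n$. Since all the higher-degree coefficients of $z$ lie in the cyclic group $\{a \in A_n : aa^* = 1\}$, which has order $(q+1)q^{n-1}$ (Lemma~\ref{order}), and since $z \equiv 1 \pmod{x^i}$ with $z \neq 1$, the order of $z$ is a power of $p$; this reduces the problem to computing that exponent. First I would observe that for an element $z = 1 + c x^i + \cdots$ with $c \neq 0$, raising to the $p$-th power kills the term $c x^i$ (the binomial coefficients $\binom{p}{1}, \dots, \binom{p}{p-1}$ vanish mod $p$) and produces $z^p = 1 + c^p x^{pi} + (\text{terms of degree} > pi)$, because in characteristic $p$ the Frobenius is additive on $A_n$ modulo the relevant truncation. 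Hence $z^{p^j} = 1 + c^{p^j} x^{p^j i} + (\text{higher order})$ as long as $p^j i < n$, and $z^{p^j} = 1$ exactly when $p^j i \geq n$.

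The next step is to pin down which power $p^j$ first satisfies $p^j i \geq n$ given the hypothesis $i \in \big]\lfloor n/p^{j}\rfloor, \lfloor n/p^{j-1}\rfloor\big]$. From the right-hand inequality $i \leq \lfloor n/p^{j-1}\rfloor \leq n/p^{j-1}$ we get $p^{j-1} i \leq n$, but I must upgrade this to the strict statement $p^{j-1} i < n$; this is where the assumption that $n$ is relatively prime to $p$ enters, since $p^{j-1} i = n$ would force $p \mid n$ (as $p^{j-1}$ divides $n$ when $j \geq 2$, and when $j = 1$ the claim $p^0 i = i < n$ is immediate from $i \leq n-1$). From the left-hand inequality $i > \lfloor n/p^{j}\rfloor$, together with the fact that $i$ is an integer, we get $i \geq \lfloor n/p^{j}\rfloor + 1 > n/p^{j}$, hence $p^{j} i > n$, so in particular $p^j i \geq n$. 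Combining, $z^{p^{j-1}} = 1 + c^{p^{j-1}} x^{p^{j-1} i} + \cdots \neq 1$ since $p^{j-1} i \leq n-1$, while $z^{p^{j}} = 1$. Therefore the order of $z$ is exactly $p^j$, and since $H_{i,u}$ is by definition the cyclic group generated by $z$, we conclude $|H_{i,u}| = p^j$, independently of $u \in \{1,\dots,t\}$.

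The main obstacle I anticipate is the bookkeeping in the claim $z^p \equiv 1 + c^p x^{pi} \pmod{x^{pi+1}}$: one must verify that no lower-degree cross terms survive. The cleanest way is to write $z = 1 + w$ with $w \in x^i A_n$, expand $(1+w)^p = \sum_{k=0}^{p} \binom{p}{k} w^k$, note $\binom{p}{k} \equiv 0 \pmod p$ for $1 \leq k \leq p-1$, and observe $w^p \in x^{pi} A_n$ with leading coefficient $c^p$; one then checks that the surviving $w^p$ term together with the precise form of $z$ (its coefficients forced by $zz^* = 1$) gives exactly the stated shape, using that the $\alpha_j$ satisfy the recursion displayed before Proposition~\ref{intersection}. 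Iterating this one power of $p$ at a time, via the two integer-part lemmas to keep the exponents $\lfloor n/p^j\rfloor$ consistent, yields the inductive step cleanly. This computation is routine but requires care with the truncation bound at each stage; everything else is a short deduction from the interval hypothesis and the coprimality of $n$ with $p$.
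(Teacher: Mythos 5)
Your proof is correct and follows the same overall strategy as the paper: reduce the problem to finding the least $\alpha$ with $ip^{\alpha}>n$, then show the interval hypothesis forces $\alpha=j$. You do, however, supply two things the paper handles differently: you actually justify the key step $z^{p}=1+c^{p}x^{pi}+(\text{higher order})$ via the vanishing of $\binom{p}{k}$ mod $p$ and the Frobenius, whereas the paper simply ``observes'' that $|H_{i,u}|=p^{\alpha_i}$ with $ip^{\alpha_i-1}<n<ip^{\alpha_i}$; and you derive $p^{j-1}i<n<p^{j}i$ directly from elementary floor-function inequalities plus $\gcd(n,p)=1$, whereas the paper works through the base-$p$ expansion of $n$. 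One cosmetic slip: your opening remark that ``the higher-degree coefficients of $z$ lie in the cyclic group $\{a:aa^{*}=1\}$'' is garbled (it is $z$ itself that lies in that group, and that group is not cyclic), but this is only motivational and is superseded by your direct computation showing $z^{p^{j}}=1$ and $z^{p^{j-1}}\neq 1$, so it creates no gap.
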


\begin{proof}
We observe that to f\/ind the order of  $H_{i,u}$ we need to f\/ind the  integer $\alpha_i$ such that
$ip^{\alpha_{i}-1}<n< ip^{\alpha_i}$, from which $H_{i,u}$ will have order $p^{\alpha_i}$.
Let us write $n=a_{r-1}p^{r-1}+a_{r-2}p^{r-2}+\cdots +a_1p+a_0$ with $0\leq a_i<p$ and $a_{r-1}\neq 0$.
Then
\begin{gather*}
\left\lfloor \frac{n}{p^j}\right\rfloor =a_{r-1}p^{r-1-j}+a_{r-2}p^{r-2-j}+\cdots +a_j,\\
\left\lfloor \frac{n}{p^{j-1}}\right\rfloor =a_{r-1}p^{r-j}+a_{r-2}p^{r-1-j}+\cdots +a_jp+a_{j-1}.
\end{gather*}
The conditions on $i$ and $n$ say
\begin{gather*}
a_{r-1}p^{r-1-j}+a_{r-2}p^{r-2-j}+\cdots +a_j< i\leq a_{r-1}p^{r-j}+a_{r-2}p^{r-1-j}+\cdots +a_jp+a_{j-1},
\end{gather*}
from which we have the inequalities
\begin{gather*}
p^{j-1}i\leq a_{r-1}p^{r-1}+a_{r-2}p^{r-2}+\cdots +a_jp^j+a_{j-1}p^{j-1}\leq n,
\end{gather*}
but since $n$ is relatively prime to $p$, we have $p^{j-1}i<n$,
and
\begin{gather*}
a_{r-1}p^{r-1}+a_{r-2}p^{r-2}+\cdots +a_jp^j< p^ji.
\end{gather*}
Using the base $p$ expansion of $n$, we get from this last inequality that $n< p^ji$. Taking $\alpha_i=j$ the result follows.
\end{proof}

\begin{corollary}\label{u-cero}
The group $U_0$ is the direct product of all subgroups $H_{i,u}$ as above, where $u$ varies from~$1$ to~$t$, and~$i$ is less or equal to~$n-1$ and relatively prime to~$p$.
\end{corollary}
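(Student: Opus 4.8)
The plan is to realize $U_0$ as an internal direct product of the subgroups $H_{i,u}$ by combining a ``directness'' input coming from Proposition~\ref{intersection} with a cardinality count coming from Propositions~\ref{uo} and~\ref{prop10}. Since $U_0\le U$ is abelian and each $H_{i,u}$ is by construction a subgroup of $U_0$, the product $P:=\prod_{i,u}H_{i,u}$ (formed inside $U_0$) is a well-defined subgroup of $U_0$, and by Proposition~\ref{uo} we have $\vert U_0\vert=q^{\,n-1}$. Hence it suffices to prove (i) the product $P$ is direct, and (ii) $\prod_{i,u}\vert H_{i,u}\vert=q^{\,n-1}$; then $P$ is a subgroup of $U_0$ of order $\vert U_0\vert$, so $P=U_0$.

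For (i), I would order the index pairs $(i,u)$ (with $1\le i\le n-1$, $\gcd(i,p)=1$, $1\le u\le t$) by \emph{decreasing} $i$ and, within a fixed value of $i$, by increasing $u$. With this order, every predecessor $(v,u)$ of a given pair $(i_0,l_0)$ satisfies $v>i_0$, or else $v=i_0$ and $u\neq l_0$; consequently the subgroup generated by all predecessors of $(i_0,l_0)$ is contained in the subgroup $\prod_{v,u}H_{v,u}$ of Proposition~\ref{intersection}, which that proposition tells us meets $H_{i_0,l_0}$ trivially. The elementary fact that an abelian group is the internal direct product of subgroups $H_1,\dots,H_m$ as soon as $H_j\cap\langle H_1,\dots,H_{j-1}\rangle=\{1\}$ for all $j$ then gives $P=\prod_{i,u}H_{i,u}$ with $\vert P\vert=\prod_{i,u}\vert H_{i,u}\vert$.

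For (ii), fix $u$. By the proof of Proposition~\ref{prop10}, $\vert H_{i,u}\vert=p^{\alpha_i}$, where $\alpha_i$ is the integer determined by $ip^{\alpha_i-1}<n\le ip^{\alpha_i}$; equivalently $\alpha_i=\#\{m\ge 0\colon ip^{m}<n\}$. Summing over the admissible $i$ and using that every integer in $\{1,\dots,n-1\}$ has a unique expression $ip^{m}$ with $\gcd(i,p)=1$ and $m\ge0$, I obtain $\sum_{i}\alpha_i=\#\{(i,m)\colon\gcd(i,p)=1,\ m\ge0,\ ip^{m}<n\}=n-1$, so $\prod_{i}\vert H_{i,u}\vert=p^{\,n-1}$, and over the $t$ choices of $u$, $\prod_{i,u}\vert H_{i,u}\vert=(p^{\,n-1})^{t}=p^{\,t(n-1)}=q^{\,n-1}$. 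Together with (i) and $\vert U_0\vert=q^{\,n-1}$ this finishes the argument. (As an alternative to the bijective count, one can telescope the sum $\sum_j j\big(\lfloor n/p^{j-1}\rfloor-2\lfloor n/p^{j}\rfloor+\lfloor n/p^{j+1}\rfloor\big)$ produced by the two floor-function lemmas preceding Proposition~\ref{prop10}.)

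The work here is organizational rather than deep. The first point requiring care is matching the chosen total order on the pairs $(i,u)$ to the exact, slightly asymmetric family of subgroups controlled by Proposition~\ref{intersection}; the second is pinning down the identity $\sum_i\alpha_i=n-1$, in particular treating the boundary index $i=n$ (coprime to $p$ when $p$ does not divide $n$, yet yielding the trivial subgroup) so that the floor-function count is not off by one.
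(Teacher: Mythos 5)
Your argument is correct and follows the paper's overall strategy: establish that the product of the $H_{i,u}$ is direct using Proposition~\ref{intersection}, then show its order equals $\vert U_0\vert=q^{n-1}$. The genuine difference is in the counting. The paper fixes $u$, groups the admissible indices $i$ into the intervals $\bigl]\lfloor n/p^{j}\rfloor,\lfloor n/p^{j-1}\rfloor\bigr]$ on which Proposition~\ref{prop10} says $\vert H_{i,u}\vert$ is constantly $p^{j}$, and evaluates the resulting sum $\sum_{j}\gamma_j(r-j)=n-1$ by an explicit telescoping of floor functions, under the standing assumption that $n$ is coprime to~$p$. You instead write $\vert H_{i,u}\vert=p^{\alpha_i}$ with $\alpha_i=\#\{m\ge 0\colon ip^{m}<n\}$ and obtain $\sum_i\alpha_i=n-1$ at once from the unique factorization $N=ip^{m}$ of each $N\in\{1,\dots,n-1\}$; this is shorter, dispenses with the floor-function bookkeeping, and does not use $\gcd(n,p)=1$. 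You are also more explicit than the paper about the directness step: the paper merely cites Proposition~\ref{intersection}, whereas you exhibit a total order on the pairs $(i,u)$ (decreasing in $i$, then increasing in $u$ for fixed $i$) under which every predecessor family is contained in the product controlled by that proposition, so that the standard criterion $H_j\cap\langle H_1,\dots,H_{j-1}\rangle=\{1\}$ for internal direct products in an abelian group applies. Both differences are refinements of the same proof rather than a new route.
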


\begin{proof}
According to Propositions~\ref{intersection},~\ref{prop10}  and Lemma~\ref{order} it is enough  to show that the order of the direct product  of all subgroups~$H_{i,u}$  is~$q^{n-1}$.

Let us f\/ix an element $u$ ($u=1,\dots,t)$. First, notice that there are no multiples of $p$ in the interval $\big\lfloor1, \big\lfloor \frac{n}{p^{r-1}}\big\rfloor\big\rfloor$.  In general, the number of multiples of $p$ in  $\big[\big\lfloor \frac{n}{p^{r-j}}\big\rfloor+1 , \big\lfloor \frac{n}{p^{r-j-1}}\big\rfloor\big]$ is
\begin{gather*}
\left\lfloor\frac{\big\lfloor\frac{n}{p^{r-j-1}}\big\rfloor}{p}\right\rfloor-\left\lfloor\frac{\big\lfloor \frac{n}{p^{r-j}}\big\rfloor}{p}\right\rfloor=
\left\lfloor\frac{n}{p^{r-j}}\right\rfloor-
\left\lfloor \frac{n}{p^{r-j+1}}\right\rfloor.
\end{gather*}
For each  integer $j\geq 0$, we denote by $\gamma_j$  the number of integers  relatively prime to $p$ in the  interval $\big[\big\lfloor \frac{n}{p^{r-j}}\big\rfloor+1 , \big\lfloor \frac{n}{p^{r-j-1}}\big\rfloor\big]$.  We observe that
\begin{gather*}
\gamma_j=\left( \left\lfloor \frac{n}{p^{r-j-1}}\right\rfloor -\left\lfloor \frac{n}{p^{r-j}}\right\rfloor  \right)-
\left(\left\lfloor\frac{n}{p^{r-j}}\right\rfloor-
\left\lfloor \frac{n}{p^{r-j+1}}\right\rfloor\right).
\end{gather*}
Then,  the order of the subgroup $H_{i,u}$ with $i$ in the interval $\big[\big\lfloor \frac{n}{p^{r-j}}\big\rfloor+1 , \big\lfloor \frac{n}{p^{r-j-1}}\big\rfloor\big]$ is $p^{r-j}$. Hence the product  of the  orders  of all~$H_{i,u}$, for a f\/ixed $u$, is $p^{\sum\limits_{j=0}^{r-1} \gamma_j (r-j)}$.  Now, we verify  that %$\sum\limits_{j=0}^{r-1} \gamma_j (r-j)=n-1$,
\begin{gather*}
\sum_{j=0}^{r-1} \gamma_j (r-j)= r\left\lfloor \frac{n}{p^{r-1}}\right\rfloor
 +(r-1)\left[\left(\left\lfloor \frac{n}{p^{r-2}}\right\rfloor -\left\lfloor \frac{n}{p^{r-1}}\right\rfloor \right)-\left\lfloor \frac{n}{p^{r-1}}\right\rfloor \right]\\
\hphantom{\sum_{j=0}^{r-1} \gamma_j (r-j)=}{}
+(r-2)\left[\left(\left\lfloor \frac{n}{p^{r-3}}\right\rfloor -\left\lfloor \frac{n}{p^{r-2}}\right\rfloor \right)-\left(\left\lfloor \frac{n}{p^{r-2}}\right\rfloor  -\left\lfloor \frac{n}{p^{r-1}}\right\rfloor\right)\right]\\
\hphantom{\sum_{j=0}^{r-1} \gamma_j (r-j)=}{}
+(r-3)\left[\left(\left\lfloor \frac{n}{p^{r-4}}\right\rfloor -\left\lfloor \frac{n}{p^{r-3}}\right\rfloor \right)-\left(\left\lfloor \frac{n}{p^{r-3}}\right\rfloor  -\left\lfloor \frac{n}{p^{r-2}}\right\rfloor\right)\right]\\
\hphantom{\sum_{j=0}^{r-1} \gamma_j (r-j)=}{}
+(r-4)\left[\left(\left\lfloor \frac{n}{p^{r-5}}\right\rfloor -\left\lfloor \frac{n}{p^{r-4}}\right\rfloor \right)-\left(\left\lfloor \frac{n}{p^{r-4}}\right\rfloor  -\left\lfloor \frac{n}{p^{r-3}}\right\rfloor\right)\right] +\cdots\\
\hphantom{\sum_{j=0}^{r-1} \gamma_j (r-j)=}{}
+3\left[\left(\left\lfloor \frac{n}{p^2}\right\rfloor -\left\lfloor \frac{n}{p^3}\right\rfloor \right)-\left(\left\lfloor \frac{n}{p^3}\right\rfloor  -\left\lfloor \frac{n}{p^4}\right\rfloor\right)\right]\\
\hphantom{\sum_{j=0}^{r-1} \gamma_j (r-j)=}{}
+2\left[\left(\left\lfloor \frac{n}{p}\right\rfloor -\left\lfloor \frac{n}{p^2}\right\rfloor \right)-\left(\left\lfloor \frac{n}{p^2}\right\rfloor  -\left\lfloor \frac{n}{p^3}\right\rfloor\right)\right]\\
\hphantom{\sum_{j=0}^{r-1} \gamma_j (r-j)=}{}
+\left[\left(\left(n-1\right) -\left\lfloor \frac{n}{p}\right\rfloor \right)-\left(\left\lfloor \frac{n}{p}\right\rfloor  -\left\lfloor \frac{n}{p^2}\right\rfloor\right)\right] =n-1.
\end{gather*}
From this, it is clear that the product of all $H_{i,u}$' (running also over $u$) is
$({p^{n-1}})^t=q^{n-1}$.
From here the result follows.
\end{proof}

We have found a decomposition of $U$ as a product of cyclic subgroups:
\begin{theorem}
The group $U$ is the direct product of a cyclic subgroup of order $q+1$ and the cyclic subgroups determined by the groups $H_{i,l}$ of Corollary~{\rm \ref{u-cero}}.
\end{theorem}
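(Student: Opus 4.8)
The plan is to assemble the statement directly from the structural results established above, so that the argument amounts to bookkeeping. First I would invoke Proposition~\ref{uo}(1), which gives $U \simeq \n_{K/k}^{-1}(1)\times U_0$; it then suffices to exhibit each factor as a direct product of cyclic groups. For the first factor, I would observe that $\n_{K/k}^{-1}(1)$ is the kernel of the norm homomorphism $K^\times\to k^\times$; since $K^\times$ is cyclic and this map is surjective onto $k^\times$, its kernel is the unique subgroup of $K^\times$ of order $(q^2-1)/(q-1)=q+1$, hence cyclic of order $q+1$.

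For the second factor I would simply cite Corollary~\ref{u-cero}, according to which $U_0$ is the internal direct product of the subgroups $H_{i,u}$ (with $1\le i\le n-1$, $i$ relatively prime to $p$, and $1\le u\le t$), each of which is cyclic by its very definition. Splicing the two factorizations together presents $U$ as the direct product of a cyclic group of order $q+1$ with the cyclic groups $H_{i,u}$, which is precisely the assertion.

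As a sanity check I would confirm that the orders agree: the proposed decomposition has order $(q+1)\prod_{i,u}|H_{i,u}| = (q+1)q^{n-1}$ by the computation in the proof of Corollary~\ref{u-cero}, matching $|U|$ as given by Lemma~\ref{order}. There is no real obstacle at this stage; the genuine work — the transitivity of the unit action on the symmetric and antisymmetric invertible elements, the identification of $U$ with the norm-one group $\{a\in A_n^\times\colon aa^*=1\}$, the explicit parametrization of $U_0$, and the delicate $p$-adic counting showing that the $H_{i,u}$ jointly generate $U_0$ with trivial pairwise intersections — has already been carried out in Propositions~\ref{action}, \ref{uo}, \ref{intersection}, \ref{prop10} and Corollary~\ref{u-cero}. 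This final theorem merely packages those facts into one clean statement.
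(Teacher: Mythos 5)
Your proposal is correct and follows essentially the same route as the paper, which states this theorem as an immediate consequence of Proposition~\ref{uo}(1) and Corollary~\ref{u-cero} without further argument. Your added observation that $\n_{K/k}^{-1}(1)$ is cyclic of order $q+1$ (as the kernel of the surjective norm map on the cyclic group $K^{\times}$) fills in the one small point the paper leaves implicit.
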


\subsection*{Acknowledgements}

We thank Pierre Cartier for his comments and suggestions at the beginning of this work.
Both authors were partially supported  by FONDECYT  grant 1120578. Moreover,  the f\/irst author was partially supported by the Universidad Austral de Chile, while the second author was also partially supported by Pontif\/icia Universidad Cat\'olica de Valpara\'iso.

\pdfbookmark[1]{References}{ref}
\LastPageEnding

\end{document}